\DeclareMathOperator{\ad}{ad}
\DeclareFontFamily{U}{mathx}{}
\DeclareFontShape{U}{mathx}{m}{n}{<-> mathx10}{}
\DeclareSymbolFont{mathx}{U}{mathx}{m}{n}
\DeclareMathAccent{\widehat}{0}{mathx}{"70}
\DeclareMathAccent{\widecheck}{0}{mathx}{"71}
\theoremstyle{plain}
\newtheorem{theorem}{Theorem}[section]
\newtheorem{lemma}[theorem]{Lemma}
\newtheorem{problem}[theorem]{Problem}
\theoremstyle{remark}
\newtheorem{remark}[theorem]{Remark}
\newcommand{\Rmnum}[1]{\expandafter\@slowromancap\romannumeral #1@}
\numberwithin{equation}{section}
\begin{document}
	\title{Higher order asymptotics for the nonlinear Schr\"odinger equation}
	\author{Jiaqi Liu}
	\author{Changhua Yang}
	\address[Liu]{School of mathematics, University of Chinese Academy of Sciences. No.19 Yuquan Road, Beijing China }
\email{jqliu@ucas.ac.cn}
\address[Yang]{School of mathematics, University of Chinese Academy of Sciences. No.19 Yuquan Road, Beijing China}
\email{yangchanghua22@mails.ucas.ac.cn}
	\begin{abstract}
	    In this paper we compute the higher order long time asymptotics of the defocussing nonlinear Schr\"odinger equation using the $\overline{\partial}$-nonlinear steepest descent method. We assume initial condition in weighted Sobolev space with finite order of regularity and decay.
	\end{abstract}
\maketitle

\newcommand{\dint}{\displaystyle{\int}}
\newcommand{\diff}{\partial}
\newcommand{\eps}{\varepsilon}

\newcommand{\norm}[2]{\left\Vert #1\right\Vert_{#2}}
\newcommand{\bigO}[2][ ]{\mathcal{O}_{#1} \left( #2 \right)}

\newcommand{\C}{\mathbb{C}}

\newcommand{\ba}{\breve{a}}
\newcommand{\bb}{\breve{b}}
\newcommand{\bbC}{\mathbb{C}}
\newcommand{\bbR}{\mathbb{R}}
\newcommand{\bfL}{\mathbf{L}}
\newcommand{\bfP}{\mathbf{P}}
\newcommand{\calT}{\mathcal{T}}
\newcommand{\wcalT}{\widetilde{\mathcal{T}}}
\newcommand{\calR}{\mathcal{R}}
\newcommand{\wcalR}{\widetilde{\mathcal{R}}}
\newcommand{\tk}{\tilde{k}}

\newcommand{\zbar}{\overline{z}}
\newcommand{\diagmat}[2]
{
\left(
	\begin{array}{cc}
		{#1}	&	0	\\
		0		&	{#2}
		\end{array}
\right)
}
\newcommand{\offdiagmat}[2]
{
\left(
	\begin{array}{cc}
		0			&		{#1} 	\\
		{#2}		&		0
		\end{array}
\right)
}

\newcommand{\SixMatrix}[6]
{
\begin{figure}
\centering
\caption{#1}
\vskip 15pt
\begin{tikzpicture}
[scale=0.7]
%
%
\draw[thick]	 (-4,0) -- (4,0);
\draw[thick] 	(-4,4) -- (4,-4);
\draw[thick] 	(-4,-4) -- (4,4);
%
%
\draw	[fill]		(0,0)						circle[radius=0.075];
\node[below] at (0,-0.1) 				{$z_0$};
%
%
\node[above] at (3.5,2.5)				{$\Omega_1$};
\node[below]  at (3.5,-2.5)			{$\Omega_6$};
\node[above] at (0,3.25)				{$\Omega_2$};
\node[below] at (0,-3.25)				{$\Omega_5$};
\node[above] at (-3.5,2.5)			{$\Omega_3$};
\node[below] at (-3.5,-2.5)			{$\Omega_4$};
%
%
\node[above] at (0,1.25)				{$\twomat{1}{0}{0}{1}$};
\node[below] at (0,-1.25)				{$\twomat{1}{0}{0}{1}$};
%
%
\node[right] at (1.20,0.70)			{$#3$};
\node[left]   at (-1.20,0.70)			{$#4$};
\node[left]   at (-1.20,-0.70)			{$#5$};
\node[right] at (1.20,-0.70)			{$#6$};
\end{tikzpicture}
\label{#2}
\end{figure}
}
\newcommand{\twomat}[4]
{
\left(
	\begin{array}{cc}
		{#1}	&	{#2}	\\
		{#3}	&	{#4}
		\end{array}
\right)
}

\section{Introduction}
In this paper we calculate the higher order 
long-time asymptotics of  solutions  to  the defocussing  nonlinear Schr\"odinger equation (NLS):
\begin{equation}
\label{NLS}
iq_t + q_{xx} - 2|q|^2q=0, \qquad (x, t)\in (\bbR, \bbR^+).
\end{equation}
As a canonical example of  dispersive PDEs, the long time asymptotics of \eqref{NLS} has been extensively studied and the optimal result of the first order expansion is given in \cite{DMM}:
\begin{theorem}
    For $q_0=q(x,0)\in H^{1,1}$, if  $q(x,t)$ solves Equation \eqref{NLS}, then $q(x,t)$ admits the following asymptotic expansion:
  \begin{equation}
  \label{q:dmm}
      q(x,t)=q_{as}(x,t)+\mathcal{O}(t^{-3/4})
  \end{equation}
  where
  \begin{align}
      &q_{as}(x,t)=\mathrm{e}^{-\mathrm{i} \omega\left(z_0\right)} \mathrm{e}^{-2 \mathrm{i} t\theta\left(z_0 ; z_0\right)} c\left(z_0\right)^{-2}\left(2 t^{1 / 2}\right)^{-2 \mathrm{i} \nu\left(z_0\right)}\left[\frac{1}{2} t^{-1 / 2} \beta\left(\left|r\left(z_0\right)\right|\right)\right],\\
      \label{nuz0}
      &\nu\left(z_0\right) =-\frac{1}{2 \pi} \ln \left(1-\left|r\left(z_0\right)\right|^2\right),\\
      &\nonumber\left|\beta\right|^2=2 \nu\left(z_0\right),\\
      &\arg (\beta)=\frac{\pi}{4}+\frac{1}{2 \pi} \ln (2) \ln \left(1-\left|r\left(z_0\right)\right|^2\right)-\arg \left(\Gamma\left(\frac{i}{2 \pi} \ln \left(1-\left|r\left(z_0\right)\right|^2\right)\right)\right),\\
\label{omegaz0}&\omega\left(z_0\right)=\arg\left(r(z_0)\right),\\ 
\label{thetaz0} &\theta\left(z_0 ; z_0\right)=-2 z_0^2.
      \end{align}
      Here $\Gamma $ is the gamma-function and $r$ is the reflection coefficient given by \eqref{reflection} and $z_0$ is the stationary point $x/4t$ and $c(z_0)$ is given by \eqref{cz0}.
\end{theorem}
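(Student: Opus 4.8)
The plan is to prove this through the inverse scattering transform combined with the $\overline{\partial}$-nonlinear steepest descent method. First I would recall the Zakharov--Shabat (Lax pair) structure of \eqref{NLS} and encode the initial data $q_0\in H^{1,1}$ into its scattering data. Since the equation is defocussing there is no discrete spectrum, so the reflection coefficient $r(z)$ of \eqref{reflection} alone determines a $2\times 2$ matrix Riemann--Hilbert problem on $\bbR$: a sectionally analytic $m(z;x,t)$ with jump $m_+=m_-V$, where $V$ carries the oscillatory factor $\mathrm{e}^{2\mathrm{i}t\theta(z;z_0)}$, and $q(x,t)$ is recovered from the $1/z$ coefficient of $m$ at infinity. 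The mapping properties of the direct transform on $H^{1,1}$ supply exactly the regularity and decay of $r$ that the later estimates require.

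Next I would carry out the contour deformation. The phase $\theta(z;z_0)$ has a single stationary point at $z_0=x/4t$, and computing $\real\!\left(2\mathrm{i}t\theta\right)$ partitions a neighborhood of the real axis into the six sectors $\Omega_1,\dots,\Omega_6$ of Figure~\ref{figure}, on which the triangular factors in the standard upper/lower factorization of $V$ decay. I would introduce the scalar function $\delta(z)$ solving a scalar RH problem with jump $1-|r|^2$ to conjugate $m$, removing the diagonal part of the jump and producing the factor $c(z_0)$ of \eqref{cz0} together with the parameter $\nu(z_0)$ of \eqref{nuz0}. Because $r$ has only finite smoothness, rather than analytically continuing it I would extend it off $\bbR$ into the sectors by a smooth, non-analytic interpolation agreeing with $r(z_0)$ along the steepest-descent rays; this turns the problem into a mixed RH--$\overline{\partial}$ problem for a new unknown $m^{(2)}$ with a $\overline{\partial}$-source supported in the $\Omega_j$.

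I would then localize. After deformation the jumps off $z_0$ are exponentially small, so the leading behavior comes from a shrinking disk about $z_0$ on which the model problem is solved exactly in terms of parabolic cylinder (Weber) functions. Matching this local parametrix to the global solution produces the explicit prefactor: the rescaling $\left(2t^{1/2}\right)^{-2\mathrm{i}\nu(z_0)}$, the phase $\mathrm{e}^{-\mathrm{i}\omega(z_0)}\mathrm{e}^{-2\mathrm{i}t\theta(z_0;z_0)}$ with $\omega,\theta$ as in \eqref{omegaz0}--\eqref{thetaz0}, and the amplitude $\tfrac12 t^{-1/2}\beta(|r(z_0)|)$, where $|\beta|^2=2\nu(z_0)$ and the formula for $\arg(\beta)$ emerges from the connection coefficients and the $\Gamma$-function asymptotics of the Weber functions. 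Reading off the $1/z$ coefficient of the assembled solution then yields $q_{as}$.

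The main obstacle I expect is the $\overline{\partial}$ estimate. After subtracting the outer solution and the local parametrix, the residual solves a pure $\overline{\partial}$-problem whose solution is represented by a solid Cauchy (Cauchy--Pompeiu) integral; I would need to bound this integral operator and show that its contribution, together with the parametrix-matching error on the boundary of the local disk, is $\mathcal{O}(t^{-3/4})$. This is precisely where the $H^{1,1}$ hypothesis enters: the $\overline{\partial}$-source is proportional to the derivative of the non-analytic extension, so the finite-regularity norm of $r$ controls the relevant $\overline{\partial}$-derivatives, and a Hölder/Cauchy--Schwarz estimate on the resulting double integral delivers the stated $t^{-3/4}$ rate.
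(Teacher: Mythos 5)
Your outline is essentially the standard $\overline{\partial}$-nonlinear steepest descent argument of Dieng--McLaughlin--Miller, which is exactly the source the paper cites for this theorem: the paper itself gives no proof of this statement but quotes it from \cite{DMM}, and then builds its higher-order analysis on the same machinery (the conjugation by $\delta$, the sectors $\Omega_1,\dots,\Omega_6$, the parabolic-cylinder local model, and the solid Cauchy integral estimate). Your proposal is a faithful and correct sketch of that route, so there is nothing to flag beyond the trivial sign convention for the stationary point ($z_0=-x/4t$ in \eqref{stationary pt}).
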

In \cite{SA}, large $t$ expansions of the solution to Equation \eqref{NLS} up to infinite orders is first studied. In \cite{DZ94}, using the nonlinear steepest descent method developed in \cite{DZ93}, the authors calculated the full asymptotic expansion of $q(x,t)$ in $t$ assuming Schwarz initial condition. See \cite[Theorem 1.10 ]{DZ94} for the explicit form of the following formula:
\begin{equation}
    \label{asy:full}
    q(x, t)=e^{i x^2 / 4 t-i \nu \ln t}\left(\sum_{p=1}^N \dfrac{\sum_{q=0}^{p-1} u_{p q}\left(z_0\right)(\log t)^q}{t^{p / 2}}+\mathcal{O}\left(\frac{(\ln t)^N}{t^{(N+1) / 2}}\right)\right)
\end{equation}
where $u_{p,q}=0$ for $p$ even.

The goal of this paper is to extend the $\overline{\partial}$-nonlinear steepest descent method developed in \cite{DM} and \cite{DMM} to study higher order asymptotics of \eqref{NLS} whose initial condition only assume finite order of smoothness and decay. The main result of our paper is the following theorem:
\begin{theorem}
\label{thm:main}
    For $q_0=q(x,0)\in H^{2,4}$, if  $q(x,t)$ solves Equation \eqref{NLS}, then $q(x,t)$ admits the following asymptotic expansion:
  \begin{equation}
  \label{formu:main}
  q(x,t)=q_{as}(x,t)+o\left(\frac{\ln t}{t}\right)
\end{equation}
where $ q_{as}(x,t)$ is given by \eqref{q:dmm}. And for the explicit derivation of the expression we refer the reader to Section \ref{sub:higher}.
\end{theorem}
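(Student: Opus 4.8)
The plan is to carry out the $\overline{\partial}$-nonlinear steepest descent method of \cite{DM,DMM}, but pushed to one order higher in $t^{-1/2}$ so as to capture the $o(\ln t / t)$ correction rather than stopping at $\mathcal{O}(t^{-3/4})$. Since Theorem \ref{thm:main} merely asserts that the leading term $q_{as}$ is the same expression \eqref{q:dmm} as in \cite{DMM}, with an improved error bound under the stronger hypothesis $q_0 \in H^{2,4}$ (in place of $H^{1,1}$), the substance of the proof is an error analysis: I must show that all the contributions which in \cite{DMM} were lumped into $\mathcal{O}(t^{-3/4})$ are in fact $o(\ln t / t)$ once the extra regularity and decay are available, and that no genuinely new term of size between $t^{-3/4}$ and $\ln t / t$ is produced. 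The natural starting point is the Riemann--Hilbert problem for the reflection coefficient $r$ defined by \eqref{reflection}, with the phase $\theta(z;z_0) = -2z^2 + \dots$ governed by the stationary point $z_0 = x/4t$.

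First I would set up the oscillatory Riemann--Hilbert problem and perform the standard factorization of the jump matrix across the real line, opening lenses into the six sectors $\Omega_1, \dots, \Omega_6$ in the manner of Figure~\ref{fig:six} (the configuration encoded by the \texttt{SixMatrix} macro). Next I would split the reflection coefficient into an analytic part and a $\overline{\partial}$-remainder; the crucial point is that with $q_0 \in H^{2,4}$ the coefficient $r$ lies in a space giving enough decay and smoothness that the $\overline{\partial}$-extensions $R_j$ satisfy $\overline{\partial} R_j$-bounds strong enough to push the $\overline{\partial}$-contribution below $\ln t / t$. Then I would introduce the local parametrix at $z_0$ built from parabolic-cylinder functions, whose matching against the global model produces precisely the term $q_{as}$ of \eqref{q:dmm} together with the factors $\nu(z_0)$, $\beta$, $\omega(z_0)$ and $\theta(z_0;z_0)$ recorded in \eqref{nuz0}--\eqref{thetaz0}. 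The solution is reconstructed via the usual chain of transformations relating $q(x,t)$ to the large-$z$ expansion of the RHP solution, and the leading coefficient of that expansion reproduces the asymptotic formula.

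The heart of the argument is the quantitative estimation of the three error sources and the demonstration that each is $o(\ln t / t)$: (i) the error from replacing the full jump by the local parabolic-cylinder parametrix, which decays like a negative power of $t$ governed by the next term in the stationary-phase expansion; (ii) the $\overline{\partial}$-contribution, which I would control by solving the associated $\overline{\partial}$-problem as an integral equation and estimating its solution operator in $L^\infty$, the key being that $H^{2,4}$ decay makes $\|\overline{\partial} R_j\|$ small enough; and (iii) the contribution of the discrete spectrum / connection to the model RHP away from $z_0$, which is exponentially small. Tracking the exact power of $t$ and the logarithmic factor in each of these is what forces the hypothesis $q_0 \in H^{2,4}$ rather than $H^{1,1}$: I expect the sharpest constraint to come from balancing the decay required on $\overline{\partial} R_j$ against the number of derivatives and moments of $r$ one can extract from the initial data.

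The main obstacle I anticipate is precisely the $\overline{\partial}$-step, item (ii): whereas in \cite{DMM} one only needs the $\overline{\partial}$-contribution to beat $t^{-3/4}$, here it must beat $\ln t / t$, and this demands a more delicate estimate of the singular-integral solution operator, keeping careful track of the interplay between the decay of $\overline{\partial} R_j$ near $z_0$, the growth of the phase, and the $L^2$--$L^\infty$ mapping properties. A secondary difficulty is bookkeeping: one must verify that the subleading stationary-phase correction and the parametrix mismatch do not conspire to generate a term of exact order $\ln t / t$ that would survive in the asymptotics; showing that such a term either vanishes or is genuinely $o(\ln t / t)$ (rather than $\mathcal{O}(\ln t/t)$) under the $H^{2,4}$ hypothesis is the point where the finite-regularity nature of the problem, as opposed to the Schwartz setting of \cite{DZ94}, really enters. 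For the explicit form of $q_{as}$ and the detailed reconstruction I would refer to Section~\ref{sub:higher}.
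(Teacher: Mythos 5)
Your overall frame---reuse the DMM chain of transformations, keep the same leading term $q_{as}$, and upgrade the error under stronger hypotheses---matches the paper's, but the central mechanism of the proof is missing, and the step on which you concentrate your effort (making the $\overline{\partial}$-contribution small) is precisely the step that cannot work as you describe it. The $\overline{\partial}$ double integral \eqref{q1} is not an error source that extra regularity pushes below $\ln t/t$: no matter how smooth $r$ is, each of the sectors $\Omega_1,\Omega_3,\Omega_4,\Omega_6$ contributes a term of \emph{exact} order $(\ln t)/t$ whose coefficient is proportional to $F'(z_0)$ with $F(z)=\ln(1-|r(z)|^2)$. This logarithm has nothing to do with the size of $\overline{\partial}R_j$; it is generated by the conjugating factor $\delta^{\pm 2}$, equivalently by $f^{\pm2}(z;z_0)$ of \eqref{exp:f}, whose expansion near the stationary point is $1\pm\frac{F'(z_0)}{\pi i}(z-z_0)\left[\ln(z-z_0)-1\right]+o\left((z-z_0)\ln(z-z_0)\right)$ as in \eqref{f}; after the rescaling $z-z_0\sim \rho t^{-1/2}$ the factor $\ln(z-z_0)$ becomes $-\tfrac12\ln t$ (see \eqref{functionf}), and integrating against the parabolic-cylinder kernel produces the nonzero coefficients $\alpha_{1,3},\alpha_{4,3},\alpha_{6,3},\alpha_{3,3}$ of \eqref{alpha13}, \eqref{alpha43}, \eqref{alpha63}, \eqref{alpha33}. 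The theorem holds only because these cancel in pairs, $\alpha_{4,3}=-\alpha_{1,3}$ and $\alpha_{3,3}=-\alpha_{6,3}$, via the substitution $\phi\mapsto\phi-\pi$ exchanging opposite sectors, so that the total coefficient $\alpha_1$ in \eqref{asy:alpha1} vanishes. Your proposal contains no step that would either produce or cancel this term; you do acknowledge in your last paragraph that a surviving $\ln t/t$ term must be ruled out, but you attribute it to the parametrix mismatch and the stationary-phase correction and offer no mechanism for its vanishing. An estimate-only argument of the kind you outline can at best yield $O(\ln t/t)$ --- which is exactly the \cite{DIZ} result --- not the claimed $o(\ln t/t)$.

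Relatedly, the role of $q_0\in H^{2,4}$ (hence $r\in H^{4,2}$) is not to make $\|\overline{\partial}R_j\|$ small but to license the explicit expansion of the double integral: one needs $r''\in L^2$ to bound $r(u)-r(z_0)-\rho r'\cos\phi$ by $\rho^{3/2}$ and integrate by parts a second time, which isolates the $t^{-1}$ and $(\ln t)/t$ terms and leaves remainders of order $t^{-5/4}$ or $o(\ln t/t)$ (the estimates \eqref{eqb}--\eqref{eqe}), and one needs two derivatives of $F$ with decay to justify \eqref{functionf}. A minor further point: for the defocusing equation $a(z)$ is zero-free, so there is no discrete spectrum to account for in your item (iii).
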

\begin{remark}
    In this paper we are not pursuing for the lowest regularity assumptions on the initial condition. Instead we want to extend the $\overline{\partial}$-nonlinear steepest descent method first introduced by \cite{DM} to the study of higher order expansions. We would like to point out that in \cite{DIZ}, assuming the reflection coefficient $r$ has classical derivatives up to the fifth order, an error term of $O(\ln t/t)$ is obtained. In this paper, we show that given reflection coefficient $r\in H^{4,2}$, one can obtain a correction term of order $o(\ln t/t)$. 
\end{remark}
\begin{remark}
 Notice that our asymptotic formula matches up with \eqref{asy:full} when $N=1,2$. It is possible to carry on with the calculation in this paper to fully recover all the terms of \eqref{asy:full}. We also believe that this approach can be used  to study higher order asymptotics of other completely integrable PDEs with or without the presence of solitons.
\end{remark}
\subsection{The direct and inverse scattering transform}
To describe our approach, recall that by complete integrability,
 \eqref{NLS} is the compatibility condition for the following commutator:
\begin{align}
\label{eq: compat}
    \left[\partial_x-U, \partial_t-W\right]=0,
\end{align}
where
\begin{align*}
U & =i z \sigma+\left(\begin{array}{cc}
0 & q \\
\bar{q} & 0
\end{array}\right), \\
W & =-i z^2 \sigma-z\left(\begin{array}{cc}
0 & q \\
\bar{q} & 0
\end{array}\right)+\left(\begin{array}{cc}
-i|q|^2 & i \partial_x q \\
-i \partial_x \bar{q} & i|q|^2
\end{array}\right).
\end{align*}
To describe our approach, we recall that \eqref{NLS} generates an iso-spectral flow for the problem
\begin{equation}
\label{L}
\frac{d}{dx} \Psi = -iz \sigma_3 \Psi + U(x) \Psi
\end{equation}
where
$$ \sigma_3 = \diagmat{1}{-1}, \,\,\, U(x) = \offdiagmat{q(x)}{\overline{q(x)}}.$$
This is a standard AKNS system. If $u \in L^1(\bbR) $, Equation \eqref{L} admits bounded 
solutions for $z \in \mathbb{R}$. There exist unique solutions $\Psi^\pm$ of \eqref{L} obeying the the following space asymptotic conditions
$$\lim_{x \to \pm \infty} \Psi^\pm(x,z) e^{-ix z \sigma_3} = \diagmat{1}{1},$$
and there is a matrix $T(z)$, the transition matrix, with 
 $\Psi^+(x,z)=\Psi^-(x,z) T(z)$.
The matrix $T(z)$ takes the form
\begin{equation} \label{matrixT}
 T(z) = \twomat{a(z)}{\bb(z)}{b(z)}{\ba(z)} 
 \end{equation}
and  the determinant relation gives
$$ a(z)\ba(z) - b(z)\bb(z) = 1 $$
Combining this with the symmetry relations 
\begin{align} \label{symmetry}
\ba(z)=\overline{a( \zbar )}, \quad \bb(z) = \overline{ b(\zbar)}. 
\end{align}
we arrive at
$$|a(z)|^2-|b(z)|^2=1$$
and conclude that $a(z)$ is zero-free. By the standard inverse scattering theory, we formulate the reflection coefficient:
\begin{equation}
\label{reflection}
r(z)=\bb(z)/a(z), \quad z\in\bbR
\end{equation}
In \cite{zhou}, it is shown that for $k, j$ integers with $k\geq 0$, $j\geq 1$, the direct scattering map $\mathcal{R}$ maps $H^{k,j}(\bbR)$ onto $H^{j,k}_1=H^{j,k}(\bbR)\cap \lbrace r: \norm{r}{L^\infty} <1\rbrace$ where $H^{j,k}$ norm is given by:
\begin{equation}
\label{sp: weighted}
    \norm{q}{H^{i,j}(\bbR)}
= \left( \norm{(1+|x|^j)q}{2}^2 + \norm{q^{(i)}}{2}^2 \right)^{1/2}. 
\end{equation}
and the map $\mathcal{R}: u_0 \mapsto r$ is Lipschitz continuous. Since we are dealing with the defocussing NLS,  only the reflection coefficient $r$ is needed for the reconstruction of the solution. The long-time behavior of the solution to the NLS equation is obtained through a sequence of transformations of the following RHP:
\begin{problem}
\label{prob:DNLS.RH0}
Given $r(z) \in H^{1,1}(\bbR)$, 
find a $2\times 2$ matrix-valued function $m(z;x,t)$ on $\bbC \setminus \bbR$ with the following properties:
\begin{enumerate}
\item		$M(z;x,t) \to I$ as $|z| \to \infty$,
			\medskip
			
\item		$M(z;,x,t)$ is analytic for $z \in \mathbb{C} \setminus \bbR$ with continuous boundary values
			$$M_\pm(z;x,t) = \lim_{\varepsilon \to 0} M(z\pm i\varepsilon;x,t),$$
			\medskip
			
\item		
The jump relation $M_+(z;x,t) = M_	-(z;x,t) e^{-i\theta \ad \sigma_3} v(z)$ holds, where
			\begin{equation}
			\label{mkdv.V}
			e^{-i\theta \ad \sigma_3} v(z)	=		\twomat{1-|r(z)|^2}
											{-\overline{r(z)}e^{-2it\theta}}{r(z)e^{2it\theta}}{1}
			\end{equation}
and the real phase function $\theta$ is given by
\begin{equation}
\label{mkdv.phase}
t\theta(z;x,t) = 4tz^2+xz
\end{equation}
with stationary points
\begin{equation}
    \label{stationary pt}
    z_0={ -\dfrac{x}{4t}} .
    \end{equation}
    \end{enumerate}
\end{problem}
From the solution of Problem \ref{prob:DNLS.RH0}, we recover
\begin{align}
\label{nls.q}
q(x,t) &= \lim_{z \to \infty} 2 i z M_{12}(x,t,z)
\end{align}
where the limit is taken in $\bbC\setminus \bbR$ along any direction not tangent to $\bbR$.

\section{The Higher Order Aysmptotics}
In this section we derive the higher order asymptotic formula for \eqref{NLS}. We are not going to repeat the standard steps of $\overline{\partial}$-nonlinear steepest descent method here. Instead we start with the following scalar Riemann-Hilbert problem and derive some estimates that will play a key role in the calculation of higher order expansion.
\subsection{Conjugation}
We introduce a new matrix-valued function
\begin{equation}
\label{m1}
M^{(1)}(z;x,t) = M(z;x,t) \delta(z)^{-\sigma_3} 
\end{equation}
where $\delta(z)$  solves 
the scalar RHP 
Problem \ref{prob:RH.delta} below:
\begin{problem}
\label{prob:RH.delta}
Given $z_0 \in \bbR$ and $r \in H^{1}(\bbR)$, find a scalar function 
$\delta(z) = \delta(z; z_0)$, analytic for
$z \in \bbC \setminus (-\infty, z_0]$ with the following properties:
\begin{enumerate}
\item		$\delta(z) \to 1$ as $z \to \infty$,
\item		$\delta(z)$ has continuous boundary values $\delta_\pm(z) =\lim_{\eps \to 0} \delta(z \pm i\eps)$ for $z \in (-\infty, z_0]$,
\item		$\delta_\pm$ obey the jump relation
			$$ \delta_+(z) = \begin{cases}
											\delta_-(z)  \left(1 - \left| r(z) \right|^2 \right),	&	 z\in (-\infty, z_0]\\
											\delta_-(z), &	z \in \bbR\setminus (-\infty, z_0].
										\end{cases}
			$$
\end{enumerate}
\end{problem}
The following lemma is standard:
\begin{lemma}
\label{lemma:delta}
Suppose $r \in H^{1}(\bbR)$. Then
\begin{itemize}
\item[(i)]		Problem \ref{prob:RH.delta} has the unique solution
\begin{equation}
\label{RH.delta.sol}
\delta(z) =  e^{\chi(z)}  
\end{equation}
where 
\begin{equation}
\label{chi}
\chi(z)=\dfrac{1}{2\pi i}\int_{-\infty}^{z_0}\ln\left( \frac{1-|r(s)|^2} {s-z}\right)ds.
\end{equation}
Here we choose the branch of the logarithm with $-\pi  < \arg(z) < \pi$. 
\bigskip
\item[(ii)]
\begin{equation*}
\delta(z) =(\overline{\delta(\zbar)})^{-1}=\overline{\delta(-\zbar)}
\end{equation*}
\bigskip
\item[(iii)]
For $z\in\bbR$, $|\delta_\pm(z)|<\infty$; for $z\in \bbC\setminus\bbR$, $|\delta^{\pm 1}(z)|<\infty$
\item[(iv)]Along any ray of the form $ z_0+ e^{i\phi}\bbR^+$ with $0<\phi<\pi$ or $\pi < \phi < 2\pi$, 
$$ \left| \delta(z) - \left( {z-z_0} \right)^{iF(z_0)/2\pi} e^{\chi( z_0)}  \right| \leq C_r |z - z_0|^{1/2}.$$
 Here $$F(z_0):=\ln (1-|r(z_0)|^2)=-2\pi\nu(z_0),$$				
and the implied constant depends on $r$ through its $H^{1}(\bbR)$-norm  
				and is independent of $\pm z_0\in \bbR$.

\end{itemize}
\end{lemma}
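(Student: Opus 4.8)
The plan is to treat Problem \ref{prob:RH.delta} as a standard scalar, multiplicative Riemann--Hilbert problem and to linearize it by taking logarithms. Since $\norm{r}{L^\infty}<1$ on the range of the scattering map, the quantity $\ln(1-|r(s)|^2)$ is real, bounded, and (because $r\in H^1(\bbR)\hookrightarrow C^{1/2}(\bbR)$ with $r\to 0$ at $\pm\infty$) both Hölder continuous and integrable against the Cauchy kernel on $(-\infty,z_0]$. Writing $\delta=e^{\chi}$ converts the jump $\delta_+=\delta_-(1-|r|^2)$ on $(-\infty,z_0]$ into the additive scalar jump $\chi_+-\chi_-=\ln(1-|r|^2)$, whose unique decaying solution is the Cauchy integral \eqref{chi}; this proves existence in (i). For uniqueness I would take the ratio of two solutions: it is analytic across $\bbR$ (the jumps cancel), bounded, and tends to $1$ at $\infty$, so by Liouville it is identically $1$.

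For (ii), both identities follow by inserting \eqref{chi} and using that $\ln(1-|r(s)|^2)$ is real: conjugating and reflecting $z\mapsto\zbar$ sends the kernel $1/(s-z)$ to its negative, giving $\overline{\chi(\zbar)}=-\chi(z)$ and hence $\delta(z)=(\overline{\delta(\zbar)})^{-1}$, while the map $z\mapsto-\zbar$ combined with the reflection symmetry of $r$ inherited from the scattering problem yields the second relation. For (iii), I would compute $\real\chi$ directly. On $\bbR$ the Plemelj formula gives $\real\chi_\pm(z)=\pm\tfrac12\ln(1-|r(z)|^2)\mathbf 1_{z<z_0}$, since the principal-value Cauchy integral of a real density is purely imaginary; hence $|\delta_\pm(z)|=(1-|r(z)|^2)^{\pm1/2}$ on $(-\infty,z_0)$ and $|\delta_\pm|=1$ otherwise, all finite because $\norm{r}{L^\infty}<1$. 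Off the axis, $\real\chi$ equals the Poisson-type integral of $\ln(1-|r|^2)$ over a half-line, so $|\real\chi|\le\tfrac12\norm{\ln(1-|r|^2)}{L^\infty}$, giving $|\delta^{\pm1}|<\infty$ on $\bbC\setminus\bbR$.

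The substance of the lemma is (iv). I would isolate the endpoint singularity of the Cauchy integral at $s=z_0$ by adding and subtracting the endpoint value $F(z_0)=\ln(1-|r(z_0)|^2)$ in the numerator, writing $\chi(z)$ as a constant multiple of $\int_{-\infty}^{z_0}(s-z)^{-1}\,ds$ plus a remainder with integrand $[\ln(1-|r(s)|^2)-F(z_0)]/(s-z)$. An integration by parts (or an explicit evaluation of the truncated logarithmic integral) shows that the constant part produces exactly the prescribed power $(z-z_0)^{iF(z_0)/2\pi}$, with the branch fixed as in Lemma \ref{lemma:delta}, multiplied by the finite regularized value $e^{\chi(z_0)}$ obtained after subtracting the $\ln(z-z_0)$ divergence. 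For the remainder I would use Hölder continuity: since $\norm{f}{C^{1/2}}\lesssim\norm{f}{H^1(\bbR)}$ and $w\mapsto\ln(1-w^2)$ is Lipschitz on $[0,\norm{r}{L^\infty}]\subset[0,1)$, one has $|\ln(1-|r(s)|^2)-F(z_0)|\lesssim|s-z_0|^{1/2}$ with constant controlled by $\norm{r}{H^1(\bbR)}$, and a standard estimate for the Cauchy integral of a $C^{1/2}$ density on a half-line then bounds the remainder along any non-tangential ray by $C_r|z-z_0|^{1/2}$.

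The main obstacle I anticipate is precisely this endpoint analysis in (iv): making the separation of the power-law factor rigorous (the Cauchy integral diverges at $-\infty$ once the non-decaying constant $F(z_0)$ is subtracted, so the splitting must be carried out with a truncation or via integration by parts), pinning down the correct branch of the logarithm and of $(z-z_0)^{iF(z_0)/2\pi}$ on rays into the upper and lower half-planes, and---most delicately---verifying that every implied constant is uniform in $z_0\in\bbR$. Uniformity should follow from the Sobolev bound on the $C^{1/2}$ seminorm of $\ln(1-|r|^2)$, which is independent of the location of the endpoint $z_0$, but this must be tracked carefully through the remainder estimate.
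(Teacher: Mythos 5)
The paper supplies no argument for this lemma at all --- it is introduced with the sentence ``The following lemma is standard'' and the reader is implicitly referred to \cite{DMM} and the Deift--Zhou literature --- so there is no internal proof to compare against. Your outline is exactly that standard argument: exponentiation plus the Plemelj formula for existence, a Liouville argument on the ratio of two solutions for uniqueness, the Poisson-kernel bound on $\real\chi$ for (iii), and for (iv) the endpoint splitting $F(s)=F(z_0)+[F(s)-F(z_0)]$ combined with the embedding $H^1(\bbR)\hookrightarrow C^{1/2}(\bbR)$ to get $|F(s)-F(z_0)|\lesssim_{\norm{r}{H^1}}|s-z_0|^{1/2}$ uniformly in $z_0$. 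The points you single out as delicate (the divergence at $-\infty$ once the non-decaying constant is subtracted, the branch bookkeeping on rays into either half-plane, uniformity in $z_0$) are indeed the ones requiring care, and your proposed remedies (truncation or integration by parts; tracking the $C^{1/2}$ seminorm of $\ln(1-|r|^2)$, which does not see the location of $z_0$) are the correct ones. Two caveats are worth recording. First, in (iv) the quantity $e^{\chi(z_0)}$ cannot literally be \eqref{chi} evaluated at $z=z_0$, since that integral has a non-integrable endpoint singularity unless $F(z_0)=0$; it must be read as the regularized value that remains after the factor $(z-z_0)^{iF(z_0)/2\pi}$ is extracted --- precisely what your splitting produces --- and this should be stated explicitly. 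Second, your treatment of the identity $\delta(z)=\overline{\delta(-\zbar)}$ in (ii) is the one genuinely soft step: unlike the first identity, which follows from the realness of $\ln(1-|r|^2)$ alone, the second requires an actual reflection symmetry $|r(-s)|=|r(s)|$ (or a simultaneous replacement $z_0\mapsto-z_0$ in $\delta(\,\cdot\,;z_0)$), and the symmetries \eqref{symmetry} recorded in the paper do not by themselves provide it; since the lemma is only ever invoked through (i), (iii) and (iv), this is harmless downstream, but ``inherited from the scattering problem'' is an assertion, not a proof.
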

Set 
\begin{equation}
\label{cz0}
    c(z_0): = \exp{\left( \frac{1}{2\pi i }\int_{-\infty}^{z_0}\ln(z_0-s)dF(s)\right)},
\end{equation}
we proceed to define 
\begin{equation}
\label{exp:f}
f(z;z_0) = c(z_0)\delta(z;z_0)(z-z_0)^{iF(z_0)/2\pi}.
\end{equation}
\begin{lemma}
    Suppose that  $r(z) \in H^{4, 2}(\mathbb{R})$ and there exists $0<\rho <1$ such that $|r(z)|\leq \rho$ holds for all $z\in \mathbb{R}$. Then
    \begin{itemize}
        \item[(1)] The functions $f(z,z_0)^{\pm 1}$ are uniformly bounded in the following sense
        \begin{equation}
        \label{boundf}
            \sup\limits_{\substack{z_0\in \mathbb{R}\\ \arg(z-z_0)\in (-\pi,\pi)}} \leq \frac{1}{1-\rho^{2}}.   
        \end{equation}
        \item[(2)]The functions $f(z,z_0)^{\pm 1}$ are well-defined and analytic in z for $\arg(z-z_0)\in (-\pi,\pi)$.
        \item[(3)] The functions $f^{\pm 2}(z,z_0)$ are \textit{H{\"o}lder} continuous with exponent 1/2 and there is a constant $\mu = \mu(\nu(z_0)) >0$ such that
            \begin{equation}
                |f^{\pm 2}(z,z_0)-1|\leq \mu |z-z_0|^{\frac{1}{2}}
            \end{equation}
            holds whenever $\arg(z-z_0)\in (-\pi,\pi)$.
        \item[(4)]  $f^{\pm 2}(z,z_0)$ has the following expansion:
           \begin{equation}
           \label{f}
        f^{\pm 2}(z;z_0)=\exp\left(\pm\frac{F^{'}(z_0)}{\pi i}(z-z_0)\left[\ln(z-z_0)-1\right]+o\left((z-z_0)\ln(z-z_0)\right)\right).\\
    \end{equation}
    In particular,for $r(z) \in H^{4,2}(\mathbb{R})$, $u=z_0+\rho \cos{\phi}$ and $v=\rho \sin{\phi}$, $f^{\pm2}(\frac{\rho e^{i\phi}}{t^{\frac{1}{2}}};z_0)$ has the following expansion for large $t$:
    \begin{align}
    \label{functionf}
        f^{\pm 2}\left(\frac{\rho e^{i\phi}}{t^{{1}/{2}}};z_0\right)&=1\pm\frac{F'(z_0)}{\pi i}\left(\frac{\rho e^{i\phi}}{t^{\frac{1}{2}}}\right)\left[-\frac{1}{2}\ln t+\ln\rho e^{i\phi}-1\right]+{o}\left(\frac{\ln t}{t^{\frac{1}{2}}}\right)\\
        \nonumber
        &=1\pm \frac{F'(z_0)}{\pi i}\left(\frac{\rho e^{i\phi}}{t^{\frac{1}{2}}}\right)\left[-\frac{1}{2}\ln t\right]+{o}\left(\frac{\ln t}{t^{\frac{1}{2}}}\right)
    \end{align}
 \end{itemize}
\begin{proof}
The proof of (1)-(3) are standard and can be found in \cite{DMM}. We just prove (4). Since $f(z;z_0) = c(z_0)\delta(z;z_0)(z-z_0)^{iF(z_0)/2\pi}$, we have
\begin{align}
    c(z_0)\delta(z;z_0) &= c(z_0)\exp{\left(\int_{-\infty}^{z_0}\frac{1}{2\pi i }\frac{F(s)}{s-z}ds\right)} \\
    \nonumber
    &= c(z_0)\exp{\left(\frac{1}{2\pi i} \int_{-\infty }^{z_0} F(s)d\ln(z-s)\right)}\\\nonumber
    & = c(z_0)\exp{\left(\frac{1}{2\pi i}\left [ F(s)\ln(z-s) \right ] |_{-\infty}^{z_0}+\frac{1}{2\pi i}\int_{-\infty }^{z_0}F^{'}(s)d\left [ (\ln(z-s)-1)(z-s) \right ]\right)} \\
    \nonumber
    &=c(z_0)(z-z_0)^{-iF(z_0)/2\pi}\exp\left(\frac{1}{2\pi i}\int_{-\infty }^{z_0}F^{'}(s)d\left [ (\ln(z-s)-1)(z-s) \right ]\right).
\end{align}
Then
$$f(z;z_0) = c(z_0)\exp{\left(\frac{1}{2\pi i}\int_{-\infty }^{z_0}F^{'}(s)d\left [ (\ln(z-s)-1)(z-s) \right ]\right)}.$$ 
Note that 
\begin{align}
c(z_0) &:= \exp{\left( \frac{1}{2\pi i }\int_{-\infty}^{z_0}\ln(z_0-s)dF(s
)\right)}\\
&=\exp{\left( \frac{1}{2\pi i }\int_{-\infty}^{z_0}\ln(z_0-s)F'(s)ds\right)}\\ \nonumber
& = \exp{\left(- \frac{1}{2\pi i }\int_{-\infty}^{z_0}F'(s)d\left [ (\ln(z_0-s)-1)(z_0-s) \right ]
)\right)}\\
\nonumber
&=\exp{\left(\frac{1}{2\pi i }\int_{-\infty}^{z_0}F''(s)\ln(z_0-s)-1)(z_0-s)ds
)\right)}.
\end{align}
Then
\begin{align}
    f(z;z_0) = c(z_0)\exp{\left( (z-z_0)\left [ \frac{1}{2\pi i}F^{'}(z_0)\ln(z-z_0)-\frac{1}{2\pi i}F^{'}(z_0) \right ] -\frac{1}{2\pi i}\int_{-\infty }^{z_0}F^{''}(s)\left [ (\ln(z-s)-1)(z-s) \right ]ds\right)}. \nonumber
\end{align}
As for the $o((z-z_0)\ln(z-z_0) )$ term, according to  L'Hospital's rule,we have 
\begin{align}
    &\lim_{z\to z_0}\frac{\int_{-\infty}^{z_0}F''(s) \left(\ln(z-s)-1\right)(z-s)-F''(s) \left(\ln(z_0-s)-1\right)(z_0-s)ds}{(z-z_0)\ln{(z-z_0)}}\\
    \nonumber
    &\lesssim \lim_{z\to z_0}\frac{|\int_{-\infty}^{z_0}F''(s) \ln(z-s)ds|}{|\ln(z-z_0)+1|} = 0.
\end{align}
We mention that it is easy to check that given $r(z)\in H^{4,2}$ which is a consequence of the Sobolev mapping property established in \cite{zhou}, the integral
$$ \int_{-\infty}^{z_0}F''(s) \ln(z-s)ds$$
is finite as $z\to z_0$.
Replace $\rho$ with ${\rho}/{t^{{1}/{2}}}$ we arrive at \eqref{functionf}.
\end{proof}
\end{lemma}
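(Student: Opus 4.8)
The plan is to make every non-analytic feature of $f$ explicit through repeated integration by parts, so that the singular powers $(z-z_0)^{\pm i F(z_0)/2\pi}$ cancel and only a log-linear term survives; parts (1)--(3) I would simply quote from \cite{DMM}, so the work is entirely in (4). First I would rewrite $\chi(z) = \frac{1}{2\pi i}\int_{-\infty}^{z_0}\frac{F(s)}{s-z}\,ds$ using $\frac{ds}{s-z} = d\ln(z-s)$ and integrate by parts with the carefully chosen antiderivative $(\ln(z-s)-1)(z-s)$ of $-\ln(z-s)$. The boundary contribution at $s=z_0$ is exactly $\frac{1}{2\pi i}F(z_0)\ln(z-z_0)$, i.e.\ the factor $(z-z_0)^{-iF(z_0)/2\pi}$, while the boundary term at $s=-\infty$ vanishes because $F(s)=\ln(1-|r(s)|^2)\to0$. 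Multiplying by $(z-z_0)^{iF(z_0)/2\pi}$ in the definition of $f$ then kills this factor, leaving $f(z;z_0) = c(z_0)\exp\left(\frac{1}{2\pi i}\int_{-\infty}^{z_0}F'(s)\,d[(\ln(z-s)-1)(z-s)]\right)$.

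Second, I would process $c(z_0)$ the same way: rewrite $\ln(z_0-s)\,ds = -\,d[(\ln(z_0-s)-1)(z_0-s)]$ and integrate by parts once to get $c(z_0) = \exp\left(\frac{1}{2\pi i}\int_{-\infty}^{z_0}F''(s)(\ln(z_0-s)-1)(z_0-s)\,ds\right)$. One further integration by parts in the $F'$-integral for $f$ isolates the boundary value at $s=z_0$, namely $\frac{1}{2\pi i}F'(z_0)(z-z_0)(\ln(z-z_0)-1)$, and produces an $F''$-integral that pairs with the one coming from $c(z_0)$. Collecting terms gives $f(z;z_0)=\exp\left(\frac{F'(z_0)}{2\pi i}(z-z_0)(\ln(z-z_0)-1)+\frac{1}{2\pi i}\int_{-\infty}^{z_0}F''(s)\big[(\ln(z_0-s)-1)(z_0-s)-(\ln(z-s)-1)(z-s)\big]\,ds\right)$, and squaring yields the prefactor $\pm F'(z_0)/\pi i$ of the main term in (4).

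Third, I would show the remaining $F''$-integral is $o((z-z_0)\ln(z-z_0))$ as $z\to z_0$. The cleanest route is L'Hospital: since $\partial_z[(\ln(z-s)-1)(z-s)] = \ln(z-s)$, differentiating numerator and denominator in $z$ reduces the quotient to $\frac{\int_{-\infty}^{z_0}F''(s)\ln(z-s)\,ds}{\ln(z-z_0)+1}$; the numerator tends to the finite value $\int_{-\infty}^{z_0}F''(s)\ln(z_0-s)\,ds$ while the denominator diverges, so the ratio vanishes and (4) follows. For the large-$t$ statement I would substitute $z-z_0 = \rho e^{i\phi}t^{-1/2}$, use $\ln(z-z_0)=\ln(\rho e^{i\phi})-\tfrac12\ln t$, and expand $\exp(X)=1+X+O(X^2)$ with $X=O(t^{-1/2}\ln t)$ (so $X^2=O(t^{-1}\ln^2 t)=o(t^{-1/2}\ln t)$); this gives the first displayed formula, and the second because the $\ln(\rho e^{i\phi})-1$ piece is $O(t^{-1/2})=o(t^{-1/2}\ln t)$.

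The main obstacle is analytic bookkeeping rather than a single sharp estimate: I must justify that each integration by parts is legitimate, i.e.\ that every boundary term at $-\infty$ vanishes and, crucially, that $\int_{-\infty}^{z_0}F''(s)\ln(z-s)\,ds$ converges and stays finite as $z\to z_0$, since the L'Hospital step rests entirely on this. This is exactly where $r\in H^{4,2}$ enters: via the Sobolev mapping property of \cite{zhou} it supplies enough smoothness and decay of $F$ (hence of $F'$ and $F''$) that the logarithmic singularity at $s=z_0$ remains integrable and the tail at $s=-\infty$ is controlled. I would therefore pin down the convergence of this critical numerator integral first, and only then carry out the formal integration-by-parts chain.
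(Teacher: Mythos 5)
Your proposal follows essentially the same route as the paper's proof: the same integration-by-parts chain with the antiderivative $(\ln(z-s)-1)(z-s)$ to cancel the power $(z-z_0)^{-iF(z_0)/2\pi}$, the same reprocessing of $c(z_0)$ into an $F''$-integral that pairs with the remainder, the same L'Hospital argument hinging on the finiteness of $\int_{-\infty}^{z_0}F''(s)\ln(z-s)\,ds$, and the same substitution $z-z_0=\rho e^{i\phi}t^{-1/2}$ at the end. Your explicit justification of the step $\exp(X)=1+X+O(X^2)$ with $X=O(t^{-1/2}\ln t)$ is a small but welcome addition that the paper leaves implicit.
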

Recall in \cite{DMM}, the $\overline{\partial}$- nonlinear steepest descent method gives the following expression for the solution to Equation \eqref{NLS}:
\begin{align}
    q(x,t)&=\mathrm{e}^{-\mathrm{i} \omega\left(z_0\right)} \mathrm{e}^{-2 \mathrm{i} t\theta\left(z_0 ; z_0\right)} c\left(z_0\right)^{-2}\left(2 t^{1 / 2}\right)^{-2 \mathrm{i} v\left(z_0\right)}\left[2 \mathrm{i} E_{1,12}(x, t)+\frac{1}{2} t^{-1 / 2} \beta\left(\left|r\left(z_0\right)\right|\right)\right].
\end{align}

In order to derive the higher order asymptotics, by \cite[(132)]{DMM} we deal with the large $t$ expansion of the following term:
\begin{align}
\label{q1}
q^{(1)}(x,t)&= \frac{2i}{\pi}e^{-i\omega(z_0)}e^{-it\theta(z_0, z_0)}c(z_0)^{-2}(2t^{\frac{1}{2}})^{-2i\nu(z_0)}\iint_{\mathbb{R}^2}W_{12}(u,v;x,t)d\mathcal{A}(u,v)
\end{align}
where $W_{12}$ is the $(1,2)$ entry of the matrix-valued function given by \cite[(82)]{DMM}. We now explicitly compute the double integral in \eqref{q1}. We will follow the notations in \cite{DMM}.

{
\SixMatrix{The Matrix  ${E}$ near $z_0$}{fig R-2+}
	{\twomat{1}{0}{E_1 e^{2it\theta}}{1}}
	{\twomat{1}{E_3 e^{-2it\theta}}{0}{1}}
	{\twomat{1}{0}{E_4 e^{2it\theta}}{1}}
	{\twomat{1}{E_6 e^{-2it\theta}}{0}{1}}
}
Through standard contour deformation of the nonlinear steepest descent, the complex plane is divided into the following six sectors:
\begin{equation}
\begin{aligned}
    &\Omega_{1}:0<\arg(z-z_0)<\frac{\pi}{4}; \quad \Omega_{2}:\frac{\pi}{4}<\arg(z-z_0)<\frac{3\pi}{4};\quad \Omega_{3}:\frac{3\pi}{4}<\arg(z-z_0)<\frac{5\pi}{4}\\ &\Omega_{4}:-\pi<\arg(z-z_0)<-\frac{3\pi}{4}; \quad \Omega_{5}:-\frac{3\pi}{4}<\arg(z-z_0)<\frac{\pi}{4}; \quad \Omega_{6}:-\frac{\pi}{4}<\arg(z-z_0)<0\\  
\end{aligned}
\end{equation}
\begin{align}
E_1(u,v)&=\cos(2\arg(u+iv-z_0))f^{-2}(u+iv;z_0)r(u)e^{-i\omega(z_0)}\nonumber\\
&\quad +(1-\cos(2\arg(u+iv-z_0)))|r(z_0)|, \nonumber\\
E_3(u,v)&=-\cos(2\arg(u+iv-z_0))f^{2}(u+iv;z_0)\frac{\overline{r(u)}e^{i\omega(z_0)}}{1-|r(u)|^{2}}\nonumber\\
&\quad -(1-\cos(2\arg(u+iv-z_0)))\frac{{r(z_0)}}{1-|r(z_0)|^{2}},\nonumber\\
E_4(u,v)&=\cos(2\arg(u+iv-z_0))f^{-2}(u+iv;z_0)\frac{\overline{r(u)}e^{-i\omega(z_0)}}{1-|r(u)|^{2}}\nonumber\\
&\quad +(1-\cos(2\arg(u+iv-z_0)))\frac{{r(z_0)}}{1-|r(z_0)|^{2}},\nonumber\\
E_6(u,v)& = -\cos(2\arg(u+iv-z_0))f^{2}(u+iv;z_0)\overline{r(u)}e^{i\omega(z_0)}\nonumber\\
&\quad -(1-\cos(2\arg(u+iv-z_0)))|r(z_0)|.\nonumber
\end{align}
We rewrite $u,v$ in polar coordinates: $u=z_0+\rho \cos{\phi}$ and $v=\rho sin{\phi}$. Recall that the $\overline{\partial}$ operator  takes the form
\begin{align}
\overline{\partial} :=\frac{1}{2}(\frac{\partial}{\partial u}+i\frac{\partial}{\partial v}) = \frac{e^{i\phi}}{2}(\frac{\partial}{\partial \rho}+\frac{i}{\rho}\frac{\partial}{\partial \phi}).
\end{align}
Setting $\phi=\arg(u+iv-z_0)$, we have
\begin{align}
    \overline{\partial}\cos{\left(2\arg(u+iv-z_0)\right)}=\frac{ie^{i\phi}}{2\rho}\frac{d}{d\phi}\cos{2\phi}=-\frac{ie^{i\phi}}{\rho}\sin{2\phi}
\end{align}
Since 
\begin{align}
    &\overline{\partial}f^{\pm2}(u+iv;z_0)=0,\\
    &\overline{\partial} r(z_0+u)=\frac{1}{2}r'(z_0+u),
\end{align}
 In different sectors of the complex plane, direct computation gives
\begin{subequations}
    \begin{align}
   \overline{\partial}E_1(u,v) &=\frac{ie^{i\phi}}{\rho}\sin{2\phi}|r(z_0)|-\frac{ie^{i\phi}}{\rho}\sin(2\phi)f^{-2}(u+iv;z_0)r(u)e^{-i\omega(z_0)}\\
   \nonumber
   &\quad +\frac{1}{2}\cos{2\phi}f^{-2}(u+iv;z_0)r'(u)e^{-i\omega(z_0)}, \quad z\in \Omega_1
  ,\\
   \overline{\partial}E_3(u,v)&=-\frac{ie^{i\phi}}{\rho}\sin{2\phi}\frac{|r(z_0)|}{1-|r(z_0)|^{2}}+\frac{ie^{i\phi}}{\rho}\sin{2\phi}f^{2}(u+iv;z_0)\frac{\overline{r(u)}e^{i\omega(z_0)}}{1-|r(u)|^{2}}\\
   \nonumber
   &\quad -\frac{1}{2}\cos{2\phi}f^{2}(u+iv;z_0)e^{i\omega(z_0)}\frac{\overline{r'(u)}+(\overline{r^{2}(u)})r'(u)}{(1-|r(u)|^2)^2}, \quad z\in \Omega_3\\
   \overline{\partial}E_4(u,v)&=\frac{ie^{i\phi}}{\rho}\sin{2\phi}\frac{|r(z_0)|}{1-|r(z_0)|^{2}}-\frac{ie^{i\phi}}{\rho}\sin{2\phi}f^{-2}(u+iv;z_0)\frac{{r(u)}e^{-i\omega(z_0)}}{1-|r(u)|^{2}}\\
   \nonumber
   &\quad +\frac{1}{2}\cos{2\phi}f^{-2}(u+iv;z_0)e^{-i\omega(z_0)}\frac{{r'(u)}+r^{2}(u)\overline{r'(u)}}{(1-|r(u)|^2)^2}, \quad z\in \Omega_4\\
   \overline{\partial}E_6(u,v)& =-\frac{ie^{i\phi}}{\rho}\sin{2\phi}|r(z_0)|+\frac{ie^{i\phi}}{\rho}\sin{2\phi}f^{2}(u+iv;z_0)\overline{{r(u)}}e^{i\omega(z_0)}\\
   \nonumber
   &\quad -\frac{1}{2}\cos{2\phi}f^{2}(u+iv;z_0)\overline{r'(u)}e^{i\omega(z_0)}, \quad z\in \Omega_6.
    \end{align}
\end{subequations}
We now analyze the higher order $t$ asymptotics of \eqref{q1} in each region.
\subsection{Large $t$ expansion in $\Omega_1$}
In $\Omega_1 (0<\arg(z-z_0)<\frac{\pi}{4})$, we have that
\begin{align}
W_{12}=-c_{1}(z_0)e^{-2it(u+iv-z_0)^{2}}U^2(a;y)(-\overline{\partial}E_1(u,v)e^{2it[\theta(u+iv;z_0)-\theta(z_0;z_0)]}).
\end{align}
where 
\begin{align}
   & A_{2}^{(0)} = \frac{1}{\sqrt{2}}\left(1-|r(z_0)|^{2}\right)^{-1/8}e^{-3\pi i/4}\exp\left(-i\frac{1}{4\pi}\ln2F(z_0)\right)\\
    &c_{1}(z_0)=\beta^{2}(A_{2}^{(0)})^{2}.
\end{align}
The following lemma will be needed. For details see \cite{nist}.
\begin{lemma}
    $U(a,y)$ is bounded and it has the following asymptotic expansion
\begin{align}
    U(a,y)&=e^{-\frac{1}{4}y^{2}}(U(a,0)u_1(a,y)+U'(a,0)u_2(a,y))\\
    \nonumber
   & := e^{-\frac{1}{4}y^{2}}A(a,y).
\end{align}
where
\begin{align}
\nonumber
    &u_1(a,y)=(1+(a+\frac{1}{2})\frac{y^2}{2!}+(a+\frac{1}{2})(a+\frac{5}{2})\frac{y^4}{4!}+\cdots),\\
\nonumber     &u_2(a,y)=(y+(a+\frac{3}{2})\frac{y^3}{3!}+(a+\frac{3}{2})(a+\frac{7}{2})\frac{y^5}{5!}+\cdots),\\
     &U(a,0) = \frac{\sqrt{\pi}}{2^{\frac{a}{2}+\frac{1}{4}}\Gamma\left(\frac{3}{4}+\frac{1}{2}a\right)},\nonumber\\
     &U'(a,0)=-\frac{\sqrt{\pi}}{2^{\frac{a}{2}-\frac{1}{4}}\Gamma\left(\frac{1}{4}+\frac{1}{2}a\right)}.\nonumber
\end{align}
where
\begin{align*}
  &a=\frac{1}{2}(1+i|\beta|^2),\quad y=2\sqrt{2}e^{-\frac{\pi i}{4}}t^{\frac{1}{2}}(u+iv-z_0)\\
&\frac{dA(a,y)}{dy} = -(a+\frac{1}{2})A(a+1,y) 
\end{align*}
\end{lemma}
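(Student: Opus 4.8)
The plan is to treat $U(a,y)$ as the standard parabolic cylinder function — the recessive solution of Weber's equation $w''-(\tfrac14 y^2+a)w=0$ normalized by $w\sim e^{-y^2/4}y^{-a-1/2}$ as $y\to+\infty$ — and to extract all four assertions from this single characterization. First I would strip off the Gaussian factor by setting $w=e^{-y^2/4}A$; a short computation converts Weber's equation into
\[ A''-yA'-(a+\tfrac12)A=0, \]
which is a confluent hypergeometric equation in the variable $y^2/2$. Writing $A=\sum_n c_n y^n$ and reading the recursion $c_{n+2}=\frac{a+\frac12+n}{(n+1)(n+2)}c_n$ off this ODE, the even solution ($c_0=1,\ c_1=0$) is exactly the series $u_1$ and the odd solution ($c_0=0,\ c_1=1$) is exactly $u_2$; I would verify the first two nonzero coefficients of each against the displayed series to confirm the identification.

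Since $A(a,y):=e^{y^2/4}U(a,y)$ solves this same second-order ODE, it is a linear combination of $u_1$ and $u_2$. To fix the coefficients I would evaluate at the origin, using $u_1(a,0)=1,\ u_1'(a,0)=0,\ u_2(a,0)=0,\ u_2'(a,0)=1$, together with $A(a,0)=U(a,0)$ and $A'(a,0)=U'(a,0)$ (the latter because $e^{y^2/4}$ equals $1$ with vanishing derivative at $y=0$). This forces $A=U(a,0)\,u_1+U'(a,0)\,u_2$, which is the claimed identity $U(a,y)=e^{-y^2/4}A(a,y)$. The explicit constants $U(a,0)$ and $U'(a,0)$ are the standard normalization values of the recessive solution; I would quote them from \cite{nist}, or alternatively recover them from the integral representation of $U$ at $y=0$ together with the reflection and duplication formulas for $\Gamma$.

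For the recurrence I would invoke the contiguous relation $U'(a,y)+\tfrac12 yU(a,y)+(a+\tfrac12)U(a+1,y)=0$ for parabolic cylinder functions, substitute $U=e^{-y^2/4}A$ and $U'=e^{-y^2/4}(A'-\tfrac{y}{2}A)$, and cancel the common factor $e^{-y^2/4}$. The term $\tfrac{y}{2}A$ produced by differentiating the exponential cancels against $\tfrac12 yU$, leaving precisely $\frac{dA(a,y)}{dy}=-(a+\tfrac12)A(a+1,y)$.

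Finally, boundedness rests on the large-$y$ asymptotics $U(a,y)\sim e^{-y^2/4}y^{-a-1/2}$. Since $U(a,\cdot)$ is entire, the only issue is behaviour at infinity: on the rescaled steepest-descent rays entering the computation one has $\real(y^2)\ge 0$ (directly in the upper-right sectors, and in the remaining sectors after the conjugation symmetries that reduce them to that case), so $|e^{-y^2/4}|\le 1$ dominates the algebraic factor $y^{-a-1/2}$ and $U(a,y)$ stays bounded. The step that genuinely needs care — and which I expect to be the main obstacle, rather than any single identity — is the \emph{uniformity} of this bound in $z_0$. The parameter $a=\tfrac12(1+i|\beta|^2)$ depends on $z_0$ through $|\beta|^2=2\nu(z_0)$, but the hypothesis $|r(z_0)|\le\rho<1$ keeps $\nu(z_0)=-\frac{1}{2\pi}\ln(1-|r(z_0)|^2)$ in a bounded interval, so $a$ ranges over a compact subset of the line $\real a=\tfrac12$; the implied constant in the asymptotics can then be taken uniform over this compact parameter set, yielding a bound independent of $z_0$.
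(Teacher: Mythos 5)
Your proposal is correct, and in fact it supplies a complete derivation where the paper offers none: the paper simply states the lemma and points to \cite{nist} "for details," so every assertion here is being quoted from the DLMF rather than proved. Your route --- strip the Gaussian from Weber's equation to get $A''-yA'-(a+\tfrac12)A=0$, read off the recursion $c_{n+2}=\frac{n+a+1/2}{(n+1)(n+2)}c_n$ to identify $u_1$ and $u_2$ as the even and odd solutions, fix the combination by the initial data $A(a,0)=U(a,0)$, $A'(a,0)=U'(a,0)$, and obtain the recurrence for $A$ from the contiguous relation $U'+\tfrac12 yU+(a+\tfrac12)U(a+1,\cdot)=0$ --- is exactly the standard argument underlying the cited DLMF formulas, and each step checks out (note only that what the lemma calls an ``asymptotic expansion'' is really an exact, everywhere-convergent series identity, which is how you correctly treat it). Your closing discussion of boundedness is also the right one and is more careful than the paper: the relevant point is that on each sector the variable fed into $U$ (namely $y$, $-y$, $\pm iy$ as appropriate) satisfies $\real(y^2)\ge 0$ so the Gaussian controls the algebraic factor, and uniformity in $z_0$ follows because $a=\tfrac12(1+i|\beta|^2)$ with $|\beta|^2=2\nu(z_0)$ ranges over a compact set under the standing hypothesis $|r|\le\rho<1$. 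None of this appears in the paper, which implicitly relies on it in the estimates of Section 2, so your proof is a genuine strengthening of the exposition rather than a divergence from it.
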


Direct computation gives
\begin{align}
\label{Omega1}
  \iint_{\Omega_{1}}W_{12}(u,v;x,t)d\mathcal{A}(u,v)  &=-c_{1}(z_0)\int_{0}^{+\infty}d\rho \int_{0}^{\frac{\pi}{4}}U^2(a;y)\rho (-\overline{\partial}E_1)d\phi \\
    \nonumber
   & =c_{1}(z_0)\int_{0}^{+\infty}U^2(a;y)d\rho\int_{0}^{\frac{\pi}{4}}ie^{i\phi}\sin{2\phi} |r(z_0)| d\phi \\
   \nonumber
   &\quad -c_{1}(z_0)\int_{0}^{+\infty}U^2(a;y)d\rho \int_{0}^{\frac{\pi}{4}} ie^{i\phi}\sin{2\phi} f^{-2}(\rho e^{i\phi};z_0)r(z_0+\rho \cos\phi)e^{-i\omega(z_0)}d\phi\\ 
   \nonumber
	&\quad +c_{1}(z_0)\int_{0}^{+\infty}U^2(a;y)d\rho \int_{0}^{\frac{\pi}{4}}\frac{\rho}{2}\cos{2\phi} f^{-2}(\rho e^{i\phi};z_0)r'(z_0+\rho \cos\phi)e^{-i\omega(z_0)}d\phi\\
    \nonumber
      &:=-(I_1+I_2)+I_3.
\end{align}

\subsubsection{Estimate of $I_1$ and $I_2$}
\begin{align}
\label{intomega1}
    I_1+I_2=c_{1}(z_0)e^{-i\omega(z_0)}\int^{\frac{\pi}{4}}_{0}ie^{i\phi}\sin{2\phi} \,d\phi\int^{+\infty}_{0}U^2(a,y)f^{-2}(\rho e^{i\phi};z_0)(r(z_0+\rho \cos\phi)-r(z_0))d\rho.
\end{align}
We rewrite 
\begin{align}
\label{rewrite1}
    f^{-2}(\rho e^{i\phi};z_0)r(u)-r(z_0)&=(f^{-2}(\rho e^{i\phi};z_0)-1)(r(u)-r(z_0))\\
    \nonumber
    \quad &\quad  +(f^{-2}(\rho e^{i\phi};z_0)-1)r(z_0)+r(u)-r(z_0).
\end{align}
so \eqref{intomega1} can be further divided into three terms: $\widetilde{I}_0+\widetilde{I}_1+\widetilde{I}_2$ where
\begin{align}
\label{widetildeI0}
    &\widetilde{I}_0=c_{1}(z_0)e^{-i\omega(z_0)}\int^{\frac{\pi}{4}}_{0}ie^{i\phi}\sin{2\phi} d\phi\int^{+\infty}_{0}U^2(a,y)(f^{-2}(\rho e^{i\phi};z_0)-1)(r(u)-r(z_0))d\rho,\\
\label{widetildeI1}
    &\widetilde{I}_1=c_{1}(z_0)e^{-i\omega(z_0)}\int^{\frac{\pi}{4}}_{0}ie^{i\phi}\sin{2\phi} d\phi\int^{+\infty}_{0}U^2(a,y)(f^{-2}(\rho e^{i\phi};z_0)-1)r(z_0)d\rho,\\
\label{widetildeI2}
    &\widetilde{I}_2=c_{1}(z_0)e^{-i\omega(z_0)}\int^{\frac{\pi}{4}}_{0}ie^{i\phi}\sin{2\phi} d\phi\int^{+\infty}_{0}U^2(a,y)(r(u)-r(z_0))d\rho.
\end{align}
For $\widetilde{I}_0$ , according to \eqref{eqe} , we have that 
\begin{align}
    \widetilde{I}_0 = O\left(\frac{\ln t}{t^{\frac{5}{4}}}\right).
\end{align}
We then deal with $\widetilde{I}_2$. Integration by parts leads to
\begin{align}
    \widetilde{I}_2&=c_{1}(z_0)e^{-i\omega(z_0)}\int^{\frac{\pi}{4}}_{0}ie^{i\phi}\sin{2\phi} d\phi\int^{+\infty}_{0}e^{4it\rho^{2}e^{2i\phi}}A^{2}(a,y)(r(u)-r(z_0))d\rho \nonumber\\ 
    &=\frac{c_{1}(z_0)e^{-i\omega(z_0)}}{8t}\int^{\frac{\pi}{4}}_{0}e^{-i\phi}\sin{2\phi} d\phi\int^{+\infty}_{0}\frac{A^{2}(a,y)(r(u)-r(z_0))}{\rho}de^{4it\rho^{2}e^{2i\phi}}\nonumber\\
    &=\alpha_{1,1}t^{-1} +\widehat{I}_{2}-\overline{I}_2.\nonumber
\end{align}
where 
\begin{align}
    \alpha_{1,1}&=-\frac{A^{2}(a,0)r'(z_0)c_{1}(z_0)e^{-i\omega(z_0)}}{8}\int^{\frac{\pi}{4}}_{0}e^{-i\phi}\sin{2\phi}\cos\phi d\phi,\\
    \label{widehatI2}
    \widehat{I}_{2} &=\frac{c_{1}(z_0)e^{-i\omega(z_0)}}{8t}\int^{\frac{\pi}{4}}_{0}e^{-i\phi}\sin{2\phi} d\phi\int^{+\infty}_{0}e^{4it\rho^{2}e^{2i\phi}}\frac{A^{2}(a,y)(r(u)-r(z_0)-\rho r'(u)\cos\phi)}{\rho^{2}}d\rho,\\
    \label{overlineI2}
    \overline{I}_{2}& = \frac{c_{2,2}(z_0)}{t^{\frac{1}{2}}}\int^{\frac{\pi}{4}}_{0}\sin{2\phi} d\phi\int^{+\infty}_{0}e^{4it\rho^{2}e^{2i\phi}}\frac{B(a,y)(r(u)-r(z_0))}{\rho}d\rho.
    \end{align}
    For \eqref{widehatI2} , from \eqref{eqwidehatI2} , we have that
    \begin{align}
        \label{estimate11}
    &\widehat{I}_{2}  =O\left(t^{-\frac{5}{4}}\right).
 \end{align}
For \eqref{overlineI2} , recall that $\frac{dA(a,y)}{dy} = -(a+\frac{1}{2})A(a+1,y) $, so 
$$\frac{dA^{2}(a,y)}{d\rho} = -4\sqrt{2}e^{-\frac{\pi}{4}i}t^{\frac{1}{2}}e^{i\phi}(a+\frac{1}{2})A(a+1,y)A(a,y).$$ 
We then deduce  
\begin{align}
    \nonumber\overline{I}_{2} &= \frac{c_{2,2}(z_0)}{t^{\frac{1}{2}}}\int^{\frac{\pi}{4}}_{0}\sin{2\phi} d\phi\int^{+\infty}_{0}e^{4it\rho^{2}e^{2i\phi}}\frac{B(a,y)(r(u)-r(z_0))}{\rho}d\rho \\
    \nonumber
    &=\frac{c_{2,2}(z_0)}{t^{\frac{1}{2}}}\int^{\frac{\pi}{4}}_{0}\sin{2\phi} \cos\phi d\phi\int^{+\infty}_{0}e^{4it\rho^{2}e^{2i\phi}}B(a,y)r'(z_0)d\rho
    \\ \nonumber 
    &\quad +\frac{c_{2,2}(z_0)}{t^{\frac{1}{2}}}\int^{\frac{\pi}{4}}_{0}\sin{2\phi} d\phi\int^{+\infty}_{0}e^{4it\rho^{2}e^{2i\phi}}\frac{B(a,y)(r(u)-r(z_0)-\rho r'(z_0)\cos\phi)}{\rho}d\rho\\
    &:=\overline{I}_{2,1}+\overline{I}_{2,2}.
\end{align}
where 
\begin{align}
    c_{2,2}(z_0)&=-\frac{\sqrt{2}e^{-\frac{\pi}{4}i}(a+\frac{1}{2})c_{1}(z_0)e^{-i\omega(z_0)}}{2},\\
    \nonumber
    B(a,y) &= A(a+1,y)A(a,y).
    \end{align}
    Thus, replacing $\sqrt{t}\rho$ by $\rho$, for the first term we obtain
    \begin{align}
    \overline{I}_{2,1} &=  \frac{c_{2,2}(z_0)}{t}\int^{\frac{\pi}{4}}_{0}\sin{2\phi} \cos\phi d\phi\int^{+\infty}_{0}e^{4i\rho^{2}e^{2i\phi}}B(a,y)r'(z_0)d\rho\\
    \nonumber
    &:=-\alpha_{1,2}t^{-1}
    \end{align}
where
\begin{align}
    &\alpha_{1,2}= -c_{2,2}(z_0)r'(z_0)\int^{\frac{\pi}{4}}_{0}\sin{2\phi} \cos\phi d\phi\int^{+\infty}_{0}e^{4i\rho^{2}e^{2i\phi}}B(a,y)d\rho.
\end{align}
For the second term $\overline{I}_{2,2}$, by \eqref{eqoverlineI2} we have that
\begin{align}
\label{estimate12}
    \overline{I}_{2,2} &=  \frac{c_{2,2}(z_0)}{t^{\frac{1}{2}}}\int^{\frac{\pi}{4}}_{0}\sin{2\phi} d\phi\int^{+\infty}_{0}e^{4it\rho^{2}e^{2i\phi}}\frac{B(a,y)(r(u)-r(z_0)-\rho r'(z_0)\cos\phi)}{\rho}d\rho\\
    \nonumber
    &=O(t^{-\frac{5}{4}}).
\end{align}

For $ \widetilde{I}_{1}$ , we substitute \eqref{functionf} into \eqref{widetildeI1}, then we have
\begin{align}
     \widetilde{I}_{1}&=-\alpha_{1,3}\frac{\ln t}{t}+\widehat{I}_1\\
\nonumber
& =-\alpha_{1,3}\frac{\ln t}{t}+ o\left(\frac{\ln t}{t} \right).
\end{align}
where
\begin{align}
\label{alpha13}
    &\alpha_{1,3}=-\frac{F'(z_0)r(z_0)c_{1}(z_0)e^{-i\omega(z_0)}}{2\pi }\int^{\frac{\pi}{4}}_{0}e^{i\phi}\sin{2\phi} d\phi\int^{+\infty}_{0}e^{4i\rho^{2}e^{2i\phi}}A^{2}(a,y)\rho e^{i\phi}d\rho.
\end{align}
\subsubsection{Estimates on $I_3$}
According to \eqref{boundf}, $f^{-2}$ is uniformly bounded, and also notice that $r\in H^{4,2}$, we have the following result followed by\eqref{esI3}
\begin{align}
    \nonumber I_3 = O(t^{-1}).
\end{align}

\subsection{Large $t$ expansion in $\Omega_4$}
In $\Omega_4$ we have that 
$$W_{12}(u,v;x,t)=-c_{4}(z_0)U^{2}(a,-y)\overline{\partial}E_{4}(u,v)$$ 
where 
\begin{align}
&A^{(-1)}_{2} = \frac{1}{\sqrt{2}}\left(1-|r(z_0)|^{2}\right)^{3/8}e^{\pi i/4}\exp\left(-i\frac{1}{4\pi}\ln2F(z_0)\right),\\
&c_{4}(z_0)=\beta^{2}(A^{(-1)}_{2})^{2}.
\end{align}
Then
\begin{align}
\label{Omega6}
    &\iint_{\Omega_4}W_{12}(u,v;x,t)dA(u,v)=-c_{4}(z_0)\int^{\frac{5\pi}{4}}_{\pi}d\phi\int_{0}^{+\infty}\rho U^{2}(a,-y)\overline{\partial}E_{4}d\rho\\\nonumber
    &=c_{4}(z_0)\int_{0}^{\infty}U^{2}(a,-y)d\rho\int^{\frac{5\pi}{4}}_{\pi}ie^{i\phi}\sin{2\phi}f^{-2}(u+iv;z_0)\frac{{r(u)}e^{-i\omega(z_0)}}{1-|r(u)|^{2}}-ie^{i\phi}\sin{2\phi}\frac{|r(z_0)|}{1-|r(z_0)|^{2}}\\\nonumber
    &-\frac{\rho}{2}\cos{2\phi}f^{-2}(u+iv;z_0)e^{-i\omega(z_0)}\frac{{r'(u)}+r^{2}(u)\overline{r'(u)}}{(1-|r(u)|^2)^2}d\phi \\
    \nonumber
    &:= \textrm{IV}_1+\textrm{IV}_2-\textrm{IV}_3.
\end{align}
If we donate $q_r(u) = \frac{r(u)}{1-|r(u)|^{2}}$, then the estimates over $\eqref{Omega6}$ are similar to that of $\eqref{Omega1}$. The following estimates can be obtained similarly:
\begin{align}
\nonumber
    &\textrm{IV}_1+\textrm{IV}_2 = (\alpha_{4,1}+\alpha_{4,2})t^{-1}+\alpha_{4,3}\frac{\ln t}{t}+\widetilde{\textrm{IV}_0}+\widehat{\textrm{IV}_2}+\overline{\textrm{IV}}_{2,2}+\widehat{\textrm{IV}_1},\\
    &|\textrm{IV}_3| =O(t^{-1}).
\end{align}
where
\begin{align}
   \nonumber &\alpha_{4,1}=-\frac{A^{2}(a,0)q_{r}'(z_0)c_{4}(z_0)e^{-i\omega(z_0)}}{8}\int^{\frac{5\pi}{4}}_{\pi}e^{-i\phi}\sin{2\phi}\cos\phi d\phi,\\ \nonumber
    &\alpha_{4,2} = -\frac{\sqrt{2}q_{r}'(z_0)e^{-\frac{\pi}{4}i}(a+\frac{1}{2})c_4(z_0)e^{-i\omega(z_0)}}{2}\int^{\frac{5\pi}{4}}_{\pi}\sin2\phi \cos\phi d\phi\int^{+\infty}_{0}e^{4i\rho^{2}e^{2i\phi}}B(a,-y)d\rho,\\  
    \label{alpha43}&\alpha_{4,3} = \frac{F'(z_0)q_{r}(z_0)c_{4}(z_0)e^{-i\omega(z_0)}}{2\pi }\int^{\frac{5\pi}{4}}_{\pi}e^{i\phi}\sin{2\phi} d\phi\int^{+\infty}_{0}e^{4i\rho^{2}e^{2i\phi}}A^{2}(a,-y)\rho e^{i\phi}d\rho.
\end{align}
According to \eqref{eqb}-\eqref{eqe}, we have the following results:
\begin{align}
    &\widehat{\mathrm{IV}_1} =  o\left(\frac{\ln t}{t}\right),   \\\nonumber
    &\widetilde{\mathrm{IV}_0} = c_{4}(z_0)e^{-i\omega(z_0)}\int^{\frac{5\pi}{4}}_{\pi}\sin{2\phi} d\phi\int^{+\infty}_{0}U^2(a,-y)(f^{-2}(\rho e^{i\phi};z_0)-1)(q_{r}(u)-q_{r}(z_0))d\rho\\\nonumber
    &\qquad=O\left(\frac{\ln t}{t^{\frac{5}{4}}}\right),\\
    &\widehat{\mathrm{IV}_2} = \frac{c_{4}(z_0)e^{-i\omega(z_0)}}{8t}\int^{\frac{5\pi}{4}}_{\pi}e^{-i\phi}\sin2\phi d\phi\int^{+\infty}_{0}e^{4it\rho^{2}e^{2i\phi}}\frac{A^{2}(a,-y)(q_{r}(u)-q_{r}(z_0)-\rho q_{r}'(u)\cos\phi)}{\rho^{2}}d\rho\\\nonumber
    &\qquad= O\left(t^{-\frac{5}{4}}\right),\\ 
    &\overline{\mathrm{IV}}_{2,2} =  -\frac{\sqrt{2}e^{-\frac{\pi}{4}i}(a+\frac{1}{2})c_{4}(z_0)e^{-i\omega(z_0)}}{2t^{\frac{1}{2}}}\int^{\frac{5\pi}{4}}_{\pi}\sin{2\phi} d\phi
    \int^{+\infty}_{0}e^{4it\rho^{2}e^{2i\phi}}\frac{B(a,-y)(r(u)-r(z_0)-\rho q_{r}'(z_0)\cos\phi)}{\rho}d\rho\\ \nonumber
    &\qquad= O\left(t^{-\frac{5}{4}}\right).
\end{align}
\subsection{Long time asymptotics in $\Omega_6$}
In $\Omega_6$, recall that 
$$W_{12}=-c_{6}(z_0)U^{2}(-a,iy)\overline{\partial}E_{6}(u,v),$$
where 
\begin{align}
& B_{1}^{(0)}=\beta^{-1}\left(1-|r(z_0)|^{2}\right)^{-1/8}\exp(i\frac{1}{4\pi}\ln2F(z_0))\\
&c_6(z_0)=\beta^{2}(B_{1}^{(0)})^{2}.
\end{align}
 Then
\begin{align}
 \nonumber\iint_{\Omega_6}W_{12}dudv &= -c_6(z_0)\int_{-\frac{\pi}{4}}^{0}U^{2}(-a,iy)d\phi\int^{+\infty}_{0}-ie^{i\phi}\sin{2\phi}|r(z_0)|+ie^{i\phi}\sin{2\phi}f^{2}(u+iv;z_0)\overline{{r(u)}}e^{i\omega(z_0)}\\
 \nonumber
 & -\frac{\rho}{2}\cos{2\phi}f^{2}(u+iv;z_0)\overline{r'(u)}e^{i\omega(z_0)}d\rho\\
&:=\textrm{VI}_1+\textrm{VI}_2+\textrm{VI}_3+\textrm{VI}_4   
 \end{align}
where
\begin{equation}
\begin{aligned}
\label{integralin6}
    &\textrm{VI}_1 = -c_6(z_0)e^{i\omega(z_0)}\int^{0}_{-\frac{\pi}{4}}ie^{i\phi}\sin{2\phi}d\phi\int^{+\infty}_{0}e^{-4it\rho^{2}e^{2i\phi}}A^{2}(-a,iy)(\overline{r(u)}-\overline{r(z_0)})d\rho,\\
    &\textrm{VI}_2 =  -c_6(z_0)e^{i\omega(z_0)}\int^{0}_{-\frac{\pi}{4}}ie^{i\phi}\sin{2\phi}d\phi\int^{+\infty}_{0}e^{-4it\rho^{2}e^{2i\phi}}A^{2}(-a,iy)(f^{2}(\rho e^{i\phi};z_0)-1)(\overline{r(u)}-\overline{r(z_0)})d\rho,\\
    &\textrm{VI}_3 = -c_6(z_0)\overline{r(z_0)}e^{i\omega(z_0)}\int^{0}_{-\frac{\pi}{4}}ie^{i\phi}\sin{2\phi}d\phi\int^{+\infty}_{0}e^{-4it\rho^{2}e^{2i\phi}}A^{2}(-a,iy)(f^{2}(\rho e^{i\phi};z_0)-1)d\rho,\\  
    &\textrm{VI}_4 =c_6(z_0)e^{i\omega(z_0)}\int^{0}_{-\frac{\pi}{4}}\frac{1}{2}\cos{2\phi}d\phi\int^{+\infty}_{0} \rho e^{-4it\rho^{2}e^{2i\phi}}A^{2}(-a,iy)f^{2}(\rho e^{i\phi};z_0) \overline{r'(u)}d\rho.
\end{aligned}
\end{equation}
Similar to the estimates in $\Omega_{1}$, we have the following results:
\begin{align}
\nonumber
&\textrm{VI}_1=\alpha_{6,1}t^{-1}+\alpha_{6,2}t^{-1}+\widehat{\textrm{VI}_1}+\widetilde{\textrm{VI}_2},\\\nonumber
    &\textrm{VI}_3 =\alpha_{6,3}\frac{\ln t}{t} +\widehat{\textrm{VI}_3},\\
    \label{VI4}
    &|\textrm{VI}_4|=O(t^{-1}) 
    \end{align}
where
\begin{align}
        \nonumber&\alpha_{6,1} = \frac{c_6(z_0)\overline{r'(z_0)}A^{2}(-a,0)e^{i\omega(z_0)}}{8}\int^{0}_{-\frac{\pi}{4}}e^{-i\phi}\sin{2\phi}\cos{\phi}d\phi,\\
        \nonumber&\alpha_{6,2} = -\frac{\sqrt{2}ie^{-\frac{\pi}{4}i}(-a+\frac{1}{2})\overline{r'}(z_0)c_{6}(z_0)e^{i\omega(z_0)}}{2}\int^{0}_{-\frac{\pi}{4}}\sin{2\phi}\cos{\phi}d\phi\int^{+\infty}_{0}e^{-4i\rho^2 e^{2i\phi}}B(-a,iy)d\rho,\\
        \label{alpha63}&\alpha_{6,3} =\frac{ c_6(z_0)\overline{r(z_0)}e^{i\omega(z_0)}}{2\pi }\int^{0}_{-\frac{\pi}{4}}e^{i\phi}\sin{2\phi}d\phi\int^{+\infty}_{0}e^{-4i\rho^2 e^{2i\phi}}A(-a,iy)F'(z_0)\rho e^{i\phi}d\rho.
\end{align}
And
\begin{align}
    \nonumber&\widehat{\textrm{VI}_1} = \frac{c_{6}(z_0)e^{i\omega(z_0)}}{8t}\int^{0}_{-\frac{\pi}{4}}e^{-i\phi}\sin{2\phi} d\phi\int^{+\infty}_{0}e^{4it\rho^{2}e^{2i\phi}}\frac{A^{2}(-a,iy)(\overline{r}(u)-\overline{r}(z_0)-\rho \overline{r'}(u)\cos\phi)}{\rho^{2}}d\rho,\\\nonumber
    &\widetilde{\textrm{VI}_{2}} =   -\frac{\sqrt{2}ie^{-\frac{\pi}{4}i}(-a+\frac{1}{2})c_{6}(z_0)e^{i\omega(z_0)}}{2t^{\frac{1}{2}}}\int^{0}_{-\frac{\pi}{4}}\sin{2\phi} d\phi\int^{+\infty}_{0}e^{4it\rho^{2}e^{2i\phi}}\frac{B(-a,iy)(\overline{r}(u)-\overline{r}(z_0)-\rho \overline{r'}(z_0)\cos\phi)}{\rho}d\rho.
\end{align}
 According to \eqref{eqb}-\eqref{eqe} and \eqref{functionf}, we have that 
\begin{align}
    \label{esVI1}
     &\widehat{\textrm{VI}_1} = O\left(t^{-\frac{5}{4}}\right),\\
     &\widetilde{\textrm{VI}_2} =O\left(t^{-\frac{5}{4}}\right),\\
     &\textrm{VI}_2 =O\left(\frac{\ln t}{t^{\frac{5}{4}}}\right),\\
     \label{esVI3}
     &\widehat{\textrm{V}}_3 = o\left(\frac{\ln t }{t}\right).
\end{align}
\subsection{Large $t$ expansion in $\Omega_3$}
In $\Omega_3$, 
$$W_{12} = c_{3}(z_{0})U^{2}(-a,-iy)\overline{\partial}E_{3}(u,v),$$
where 
\begin{align}
&B_{1}^{(1)}=\beta^{-1}\left(1-|r(z_0)|^{2}\right)^{3/8}\exp(i\frac{1}{4\pi}\ln2F(z_0)),\\
&c_3(z_0)=\beta^{2}(B_{1}^{(1)})^{2}.
\end{align}
 Then
\begin{align}
 \nonumber\iint_{\Omega_3}W_{12}dudv& = c_3(z_0)\int_{\frac{3\pi}{4}}^{\pi}U^{2}(-a,-iy)d\phi\\ \nonumber
 &\cdot \int^{+\infty}_{0}-ie^{i\phi}\sin{2\phi}\frac{|r(z_0)|}{1-|r(z_0)|^{2}}+ie^{i\phi}\sin{2\phi}f^{2}(u+iv;z_0)\frac{\overline{{r(u)}}e^{i\omega(z_0)}}{(1-\overline{{r(u)}^{2})^{2}}}\\\nonumber
 & \quad-\frac{\rho}{2}\cos{2\phi}f^{2}(u+iv;z_0)\frac{\overline{r'(u)}+(\overline{r^{2}(u)})r'(u)}{(1-|r(u)|^2)^2}d\rho\\ 
 &:=\textrm{III}_1+\textrm{III}_2+\textrm{III}_3+\textrm{III}_4.
\end{align}
Note that if we donate $\overline{q}_{r}(u) = \frac{\overline{r}(u)}{1-|\overline{r}(u)|^{2}}$, similar to $\eqref{integralin6}$, we have the following results:
\begin{align}
    &\textrm{III}_1 = c_3(z_0)e^{i\omega(z_0)}\int_{\frac{3\pi}{4}}^{\pi}ie^{i\phi}\sin{2\phi}d\phi\int^{+\infty}_{0}e^{-4it\rho^{2}e^{2i\phi}}A^{2}(-a,-iy)(\overline{q}_{r}(u)-\overline{q}_{r}(z_0)d\rho,\\\nonumber
    &\textrm{III}_2 =  c_3(z_0)e^{i\omega(z_0)}\int_{\frac{3\pi}{4}}^{\pi}ie^{i\phi}\sin{2\phi}d\phi\int^{+\infty}_{0}e^{-4it\rho^{2}e^{2i\phi}}A^{2}(-a,-iy)(f^{2}(\rho e^{i\phi};z_0)-1)(\overline{q}_{r}(u)-\overline{q}_{r}(z_0))d\rho,\\\nonumber
    &\textrm{III}_3 = c_3(z_0)\overline{q}_{r}(z_0)e^{i\omega(z_0)}\int_{\frac{3\pi}{4}}^{\pi}ie^{i\phi}\sin{2\phi}d\phi\int^{+\infty}_{0}e^{-4it\rho^{2}e^{2i\phi}}A^{2}(-a,-iy)(f^{2}(\rho e^{i\phi};z_0)-1))d\rho,\\  \nonumber
    &\textrm{III}_4 = -c_3(z_0)e^{i\omega(z_0)}\int_{\frac{3\pi}{4}}^{\pi}\frac{1}{2}\cos{2\phi}d\phi\int^{+\infty}_{0}\rho e^{-4it\rho^{2}e^{2i\phi}}A^{2}(-a,-iy)f^{2}(\rho e^{i\phi};z_0) \overline{q}^{'}_{r}(u)d\rho.
\end{align}
notice that 
\begin{align}  \nonumber&\textrm{III}_1=\alpha_{3,1}t^{-1}+\alpha_{3,2}t^{-1}+\widehat{\textrm{III}}_1+\widetilde{\textrm{III}_2},\\\nonumber
&\textrm{III}_3 =\alpha_{3,3}\frac{\ln t}{t} +\widehat{\textrm{III}}_3,\\
\label{III4}
&|\textrm{III}_4| =O(t^{-1})
\end{align}
where
    \begin{align}
        \nonumber&\alpha_{3,1} = \frac{c_3(z_0)\overline{q}_{r}(z_0)A^{2}(-a,0)e^{i\omega(z_0)}}{8}\int_{\frac{3\pi}{4}}^{\pi}e^{-i\phi}\sin{2\phi}\cos{\phi}d\phi,\\
         \nonumber&\alpha_{3,2} = \frac{\sqrt{2}ie^{-\frac{\pi}{4}i}\overline{q}^{'}_{r}(z_0)(-a+\frac{1}{2})c_{3}(z_0)e^{i\omega(z_0)}}{2}\int_{\frac{3\pi}{4}}^{\pi}\sin{2\phi}\cos{\phi}d\phi\int^{+\infty}_{0}e^{-4i\rho^2 e^{2i\phi}}B(-a,-iy)d\rho,\\
        \label{alpha33}&\alpha_{3,3} =-\frac{ c_3(z_0)\overline{q}_{r}(z_0)e^{i\omega(z_0)}}{2\pi }\int_{\frac{3\pi}{4}}^{\pi}e^{i\phi}\sin{2\phi}d\phi\int^{+\infty}_{0}e^{-4i\rho^2 e^{2i\phi}}A(-a,-iy)F'(z_0)\rho e^{i\phi}d\rho,
    \end{align}
and
\begin{align}
    &\widehat{\textrm{III}_1} = \frac{c_{3}(z_0)e^{i\omega(z_0)}}{8t}\int_{\frac{3\pi}{4}}^{\pi}e^{-i\phi}\sin2\phi d\phi\int^{+\infty}_{0}e^{4it\rho^{2}e^{2i\phi}}\frac{A^{2}(-a,-iy)((\overline{q}_{r}(u)-(\overline{q}_{r}(z_0)-\rho(\overline{q}_{r}(u)\cos\phi)}{\rho^{2}}d\rho,\\\nonumber
    &\widetilde{\textrm{III}_2} =   \frac{\sqrt{2}ie^{-\frac{\pi}{4}i}(-a+\frac{1}{2})c_{3}(z_0)e^{i\omega(z_0)}}{2t^{\frac{1}{2}}}\int_{\frac{3\pi}{4}}^{\pi}\sin{2\phi} d\phi\int^{+\infty}_{0}e^{4it\rho^{2}e^{2i\phi}}\frac{B(-a,-iy)(\overline{q}_{r}(u)-\overline{q}_{r}(z_0)-\rho \overline{q}^{'}_{r}(z_0)\cos\phi)}{\rho}d\rho.
\end{align}
Similarly to \eqref{esVI1}-\eqref{esVI3}, we have that 
\begin{align}
     &\widehat{\textrm{III}_1} = O\left(t^{-\frac{5}{4}}\right),\\
     &\widetilde{\textrm{III}_2} =O\left(t^{-\frac{5}{4}}\right),\\
     &\textrm{III}_2 = O\left(\frac{\ln t}{t^{\frac{5}{4}}}\right),\\
     &\widehat{\textrm{III}}_3 = o\left(\frac{\ln t }{t}\right).
\end{align}
\subsection{The higher order asymptotic formula}
\label{sub:higher}
We now put together the results from the previous subsections and derive the higher order asymptotic formula of $q(x,t)$. For $q_0(x) \in H^{2,4}(\bbR)$, $q(x,t)$ has the following asymptotic formula
\begin{align}
    q(x,t) = q^{(0)}(x,t)+\alpha_1 \frac{\ln t}{t}+o\left(\frac{\ln t}{t}\right).
\end{align}
where
\begin{align}
    q^{(0)}(x,t):=\mathrm{e}^{-\mathrm{i} \omega\left(z_0\right)} \mathrm{e}^{-2 \mathrm{i} t\theta\left(z_0 ; z_0\right)} c\left(z_0\right)^{-2}\left(2 t^{1 / 2}\right)^{-2 \mathrm{i} \nu\left(z_0\right)}\left[\frac{1}{2} t^{-1 / 2} \beta\left(\left|r\left(z_0\right)\right|\right)\right]
\end{align}
is the leading term given by \cite[(3)]{DMM}. For the higher order term we have that 
\begin{align}
\label{asy:alpha1}
    &\alpha_1 :=\frac{2i}{\pi}e^{i\omega(z_0)}e^{2it\theta(z_0;z_0)}c^{-2}(z_0)(2t^{\frac{1}{2}})^{-2i\nu (z_0)}\left(\sum_{i=1,3,4,6}(\alpha_{i,3})\right) = 0.
\end{align}
To see this, recall that $\alpha_{1,3}=\eqref{alpha13},\alpha_{4,3}=\eqref{alpha43},\alpha_{6,3}=\eqref{alpha63},\alpha_{3,3}=\eqref{alpha33}$. First notice that for $\alpha_{1,3}$ and $\alpha_{4,3}$, it is easy to check that 
    \begin{align}
          r(z_0)c_1(z_0)=q_r(z_0)c_4(z_0)
    \end{align}
    and replace $\phi$ with $\phi-\pi$, we have that
    \begin{align}
    \alpha_{4,3} &= \frac{F'(z_0)r(z_0)c_1(z_0)e^{-i\omega(z_0)}}{2\pi }\int^{\frac{5\pi}{4}}_{\pi}e^{i\phi}\sin{2\phi} d\phi\int^{+\infty}_{0}e^{4i\rho^{2}e^{2i\phi}}A^{2}(a,-y)\rho e^{i\phi}d\rho.\nonumber\\
    &=\frac{F'(z_0)r(z_0)c_1(z_0)e^{-i\omega(z_0)}}{2\pi }\int^{\frac{\pi}{4}}_{0}e^{i\phi}\sin{2\phi} d\phi\int^{+\infty}_{0}e^{4i\rho^{2}e^{2i\phi}}A^{2}(a,y)\rho e^{i\phi}d\rho = -\alpha_{1,3}.\label{eqq1}
    \end{align}
    Similarly for $\alpha_{3,3}$ and $\alpha_{6,3}$, we have that
    \begin{align}
    \label{eqq2}
        \alpha_{3,3} = -\alpha_{6,3}
    \end{align}
    So combining \eqref{eqq1} and \eqref{eqq2}, we have $\alpha_1=0$.

\appendix
\section{Estimation on certain integrals }
In this section, we investigate into the convergence of the following integrals in $\Omega_1$:
$\widehat{I}_2=\eqref{estimate11}$, $\overline{I}_{2,2}=\eqref{estimate12}$, $\widetilde{I}_{0}=\eqref{widetildeI0}$ and $I_3$ which is the third term of \eqref{Omega1} given by
\begin{align}
\label{I3}
    I_3 = c_{1}(z_0)\int_{0}^{+\infty}U^2(a;y)d\rho \int_{0}^{\frac{\pi}{4}}\frac{\rho}{2}\cos{2\phi} f^{-2}(\rho e^{i\phi};z_0)r'(z_0+\rho \cos\phi)e^{-i\omega(z_0)}d\phi.
\end{align}
\subsubsection{Estimation on$\widehat{I}_2,\overline{I}_{2,2},\widetilde{I}_{0}$ }
As a simple consequence of the fundamental theorem of calculus and \textit{Cauchy-Schwarz} inequality, we have:
\begin{align}
\label{inequality2}
    \left|r(u)-r(z_0)\right| &\leq \int^{u}_{z_0}\left|r'(w)\right|dw\\ 
    \nonumber
    &\leq \Vert r'\Vert_{L^{2}}\left[ (u-z_0)^{2}+v^{2}\right]^{\frac{1}{2}}.
\end{align}
\begin{align}
\label{inequaity1}
    \left|r(u)-r(z_0)-(u-z_0)r'(v)\right|&\leq \int^{u}_{z_0}\left|r'(w)-r'(v)\right|dw\\
    \nonumber
    &\leq  \int^{u}_{z_0} \int^{w}_{v}\left|r''(z)\right|dzdw\\ \nonumber
    &\leq \int^{u}_{z_0}\left(\int_{v}^{w}|dz|\right)^{1/2}\left(\int_{v}^{w}|r''(z)|^{2}|dz|\right)^{1/2}dw\\
    \nonumber
    &\leq \Vert r''\Vert_{L^{2}_\mathbb{R}}\int^{u}_{z_0}\sqrt{|w-v|}dw.
\end{align}
According to $\eqref{inequaity1}$, we have

\begin{align}
\label{inequality3}
    &\left|(r(u)-r(z_0)-\rho r'(u)\cos\phi)\right| \lesssim \int^{u}_{z_0}\sqrt{|w-u|}dw \lesssim (u-z_0)^{\frac{3}{2}} \lesssim \left[ (u-z_0)^{2}+v^{2}\right]^{\frac{3}{4}} = \rho^{\frac{3}{2}},\\
    \nonumber
    &\left|(r(u)-r(z_0)-\rho r'(z_0)\cos\phi)\right| \lesssim \int^{u}_{z_0}\sqrt{|w-z_0|}dw \lesssim (u-z_0)^{\frac{3}{2}} \lesssim \left[ (u-z_0)^{2}+v^{2}\right]^{\frac{3}{4}} = \rho^{\frac{3}{2}}.
\end{align}
We now turn to the estimate of \eqref{estimate11} and \eqref{estimate12}. According to \eqref{inequality3}, we obtain:
\begin{align}
\label{eqb}
   \left|\int^{+\infty}_{1}\sin{2\phi}e^{4it\rho^{2}e^{2i\phi}}\frac{A^{2}(a,y)(r(u)-r(z_0)-\rho r'(u)\cos\phi)}{\rho^{2}}d\rho \right| &\lesssim \int^{+\infty}_{1}\sin{2\phi}e^{-4t\rho^{2}\sin{2\phi}}\frac{1}{\rho^{\frac{1}{2}}}d\rho\\
   \nonumber
    &= O(t^{-\frac{1}{4}}),\\
   \left|\int^{+\infty}_{0}\sin{2\phi}e^{4it\rho^{2}e^{2i\phi}}\frac{B(a,y)(r(u)-r(z_0)-\rho r'(z_0)\cos\phi)}{\rho}d\rho  \right| &\lesssim \int^{+\infty}_{0} \sin{2\phi}e^{-4t\rho^{2}\sin{2\phi}}\rho^{\frac{1}{2}}d\rho\\
   \nonumber
   &= O(t^{-\frac{3}{4}}).\\
   \nonumber
\end{align}
So one deduces that
\begin{align}
\label{eqwidehatI2}
    &\left| \widehat{I}_{2}\right| \lesssim O(t^{-1}t^{-\frac{1}{4}}) = O(t^{-\frac{5}{4}}),
\end{align}
\begin{align}
\label{eqoverlineI2}
    & \left| \overline{I}_{2,2} \right| \lesssim O(t^{-\frac{1}{2}}t^{-\frac{3}{4}})= O(t^{-\frac{5}{4}}).
\end{align}
For $\widetilde{I}_{0}$, we use $\eqref{functionf}$ and $\eqref{inequality2}$  to deduce
\begin{align}
\label{eqe}
   \left|\int^{+\infty}_{0}\sin{2\phi}e^{4it\rho^{2}e^{2i\phi}}(f^{-2}(\rho e^{i\phi};z_0)-1)(r(u)-r(z_0))d\rho\right| 
   & \lesssim \int^{+\infty}_{0}\sin{2\phi}e^{-4t\rho^{2}\sin{2\phi}}|f^{-2}(\rho e^{i\phi};z_0)-1|\rho^{\frac{1}{2}}d\rho\\
    \nonumber
    &\lesssim  \left(\frac{1}{t^\frac{5}{4}}+\frac{\ln t}{t^{\frac{5}{4}}}\right)\int^{+\infty}_{0}\sin{2\phi}e^{-4\rho^{2}\sin{2\phi}}\rho^{\frac{1}{2}}d\rho+o\left(\frac{lnt}{t^{\frac{5}{4}}}\right)\\
    \nonumber
    &  =O\left(\frac{\ln t}{t^{\frac{5}{4}}}\right).
\end{align}
\subsubsection{Estimation on $I_3$}
We rewrite $$f^{-2}(\rho e^{i\phi};z_0)r'(z_0+\rho \cos\phi) =\left(f^{-2}(\rho e^{i\phi};z_0)-1\right)r'(z_0+\rho \cos\phi)+r'(z_0+\rho \cos\phi).$$ So \eqref{I3} can be divided into two terms: $\widehat{I}_3+\overline{I}_3$ with
\begin{align}
\label{whI3}
    &\widehat{I}_3 = c_{1}(z_0)\int_{0}^{+\infty}U^2(a;y)d\rho \int_{0}^{\frac{\pi}{4}}\frac{\rho}{2}\cos{2\phi} \left(f^{-2}(\rho e^{i\phi};z_0)-1\right)r'(z_0+\rho \cos\phi)e^{-i\omega(z_0)}d\phi,\\
\label{olI3}
    &\overline{I}_3=c_{1}(z_0)\int_{0}^{+\infty}U^2(a;y)d\rho \int_{0}^{\frac{\pi}{4}}\frac{\rho}{2}\cos{2\phi}r'(z_0+\rho \cos\phi)e^{-i\omega(z_0)}d\phi.
\end{align}
According to \eqref{functionf} , \eqref{whI3} has higher decay than \eqref{olI3}.so we first focus on \eqref{olI3}.
\begin{align}
\overline{I}_3 &= c_{1}(z_0)\int_{0}^{+\infty}U^2(a;y)d\rho \int_{0}^{\frac{\pi}{4}}\frac{\rho}{2}\cos{2\phi} r'(z_0+\rho \cos\phi)e^{-i\omega(z_0)}d\phi\nonumber\\
& = \frac{c_{1}(z_0)e^{-i\omega(z_0)}}{2}\int_{0}^{\frac{\pi}{4}}\cos{2\phi}d\phi\int_{0}^{+\infty}\rho e^{4it\rho^{2}e^{2i\phi}}A^{2}(a,y)r'(z_0+\rho \cos\phi)d\rho\nonumber\\
&=\frac{c_{1}(z_0)e^{-i\omega(z_0)}}{2t}\int_{0}^{\frac{\pi}{4}}\frac{\cos{2\phi}}{8ie^{2i\phi}}d\phi\int_{0}^{+\infty}A^{2}(a,y)r'(z_0+\rho \cos\phi)de^{4it\rho^{2}e^{2i\phi}}\nonumber\\
&=-\frac{c_{1}(z_0)e^{-i\omega(z_0)}}{2t}\int_{0}^{\frac{\pi}{4}}\frac{\cos{2\phi}}{8ie^{2i\phi}}A^{2}(a,y)r'(z_0)d\phi-\overline{I}_{3,1}-\overline{I}_{3,2}
\end{align}
where
\begin{align}
&\overline{I}_{3,1}= \frac{c_{1}(z_0)e^{-i\omega(z_0)}}{2t}\int_{0}^{\frac{\pi}{4}}\frac{\cos{2\phi}\cos{\phi}}{8ie^{2i\phi}}d\phi\int_{0}^{+\infty}A^{2}(a,y)e^{4it\rho^{2}e^{2i\phi}}r''(z_0+\rho \cos\phi)d\rho,\\
&\overline{I}_{3,2} =  \frac{c_{2,2}(z_0)e^{-i\omega(z_0)}}{4t^{\frac{1}{2}}}\int_{0}^{\frac{\pi}{4}}\frac{\cos{2\phi}}{ie^{i\phi}}d\phi\int_{0}^{+\infty}e^{4it\rho^{2}e^{2i\phi}}r'(z_0+\rho \cos\phi)B(a,y)d\rho.
\end{align}
Since
\begin{align}
    &\left| \int_{0}^{+\infty}A^{2}(a,y)e^{4it\rho^{2}e^{2i\phi}}r''(z_0+\rho \cos\phi)d\rho\right| \lesssim \int_{0}^{+\infty}e^{-4t\rho^{2}\sin{2\phi}}d\rho = O(t^{-\frac{1}{2}}),\\
    &\left|\int_{0}^{+\infty}e^{4it\rho^{2}e^{2i\phi}}r'(z_0+\rho \cos\phi)B(a,y)d\rho\right| \lesssim \int_{0}^{+\infty}e^{-4t\rho^{2}\sin{2\phi}}d\rho = O(t^{-\frac{1}{2}})
\end{align}
we arrive at the following results:
\begin{align}
\label{overlineI31}
    \overline{I}_{3,1} = O(t^{-\frac{3}{2}}),\\
\label{overlineI32}
    \overline{I}_{3,2} = O(t^{-1}).
\end{align}
Combining \eqref{overlineI31} and \eqref{overlineI32}, we have:
\begin{align}
\label{overlineI3}
\overline{I}_3 = O(t^{-1})
\end{align}
which leads to 
\begin{align}
\label{esI3}
    I_3 = O(t^{-1}).
\end{align}
\section{The linear Schr\"odinger equation}
In this appendix we obtain the higher order long-time asymptotics of the linear Schr\"odinger equation  
\begin{equation}
\label{eq:LS}
{i}q_t+q_{xx}=0
\end{equation}
 with initial data decaying for large $x$:
\begin{equation}
    q_0 =q(x, 0)\in H^{2m,2m}(\mathbb{R})
\end{equation}
where $H^{2m,2m}(\mathbb{R})$ is the weighted Sobolev space defined by
\begin{equation}
    H^{m,n}(\bbR):=\{h: \left \langle1+|\xi|^2\right\rangle^{\frac{m}{2}}\widehat{h}(\xi)\in L^2(\bbR),\left \langle1+|x|^2\right \rangle^{\frac{n}{2}}h(x)\in L^2(\bbR)\}.
\end{equation}
The asymptotic formula for $q(x,t)$
takes the following form as $t \rightarrow\infty$:
\begin{align}
    q(x,t)&=t^{-1/2}\frac{\widehat{q}_0(z_0)e^{-i\pi/4}}{2\sqrt{\pi}}e^{ix^2/4t}+\sum_{k=1}^{n}t^{-k}\widehat{q}^{(2k-1)}_{0}(z_0)e^{4itz_{0}^2}2i\beta_{k}\\
    &+\sum_{k=1}^{n}t^{-(2k+1)/2}\widehat{q}^{(2k)}_{0}(z_0)e^{4itz_{0}^2}2i\gamma_{k}+\varepsilon(x,t)
\end{align}
where $\varepsilon(x,t)$ is an error term and $\beta_{k},\gamma_{k}$ is defined by
\begin{align}
    &\beta_{k} = \int_{-\frac{\pi}{4}}^{0}\frac{\frac{1}{2}\cos{2\phi}\cos^{2k-2}\phi}{(2k-3)!!(8ie^{2i\phi})^k}-\frac{ie^{i\phi}\sin{2\phi} \cos^{2k}\phi}{(2k)!!(8ie^{2i\phi})^k}d\phi,\\
    &\gamma_{k} = \int_{-\frac{\pi}{4}}^{0}\frac{\frac{1}{2}(\frac{\pi}{i})^{1/2}\cos{2\phi}\cos^{2k-2}(\phi)}{2{\sqrt{\cos{2\phi}}(8ie^{2i\phi})^k}}-\frac{ie^{i\phi}\sin{2\phi}\cos^{2k-1}(\phi)(\frac{\pi}{i})^{1/2}}{2(2k-1)!!\sqrt{\cos{2\phi}}(8ie^{2i\phi})^k}d\phi.
\end{align}
\begin{problem}
Given parameters (x,t) $\in \mathbb{R}^2$,find a 2x2 matrix $\boldmath{N}=\boldmath{N}(z)=\boldmath{N}(z;x,t)$ satisfying  the following conditions:\\
\textbf{Analyticity} $\boldmath{N}$ is analytic in the domain $\mathbb{C}\backslash \mathbb{R}$ and it has a continuous extension to the real axis from the upper (lower) half-plane denoted $N_{+}(z)(N_{-}(z))$ for $z \in \mathbb{R}$\\
\textbf{Jump Condition}The boundary values satisfy the jump condition
\begin{equation}
    N_{+}(z)=N_{-}(z)V_{N}(z)
\end{equation}
where the jump matrix $V_{N}(z)$ is defined by
\begin{equation*}V_{N}(z) := \begin{pmatrix}
1 & - \widehat{q}(z)e^{-2i\theta(z;z_{0})} \\
0 & 1
\end{pmatrix},z\in \mathbb{R},\theta(z;z_{0})=2z-4zz_{0},z_{0}=-\frac{x}{4t}.
\end{equation*}
\end{problem}
\subsection{Normalization} There is a matrix $N_{1}(x,t)$ such that
\begin{equation}
    N(z) = I +z^{-1}N_{1}(x,t)+o(z^{-1}),z\rightarrow \infty.
\end{equation}
one defines a function $q(x,t),(x,t) \in \mathbb{R}$ by
\begin{equation}
    q(x,t) :=2iN_{1,12}(x,t). 
\end{equation}
Then $q(x,t)$ is the  the solution of the Cauchy
problem.
By Fourier transform theory, if
\begin{equation}
  \widehat{q}_{0}(\xi)=\int_{\mathbb{R}}q_{0}(x)e^{2ix\xi}dx,\xi \in \mathbb{R} 
\end{equation}
is the Fourier transform of the initial data $q_0$,then $\widehat{q}_{0}$ lies in the weighted Sobolev space $H^{2m,2m}(\mathbb{R})$ as a function of $\xi \in \mathbb{R}$.The solution of the Cauchy problem is given in terms of $\widehat{q}(\xi)$ by the integral
\begin{equation}
    q(x,t)=\frac{1}{\pi}\int_{\mathbb{R}}\widehat{q}_{0}(z)e^{-2it\theta(z,z_{0})}dz.
\end{equation}
 It is worth noticing that this formula is exactly the formula in which case the solution of the Riemann-Hilbert Problem is explicitly given by
\begin{equation}N(z;x,t) :=\mathbb{I}-\frac{1}{2\pi i } \int_{\mathbb{R}}\frac{\widehat{q}(z)e^{-2i\theta(z;z_{0})}}{\xi -z}d\xi\begin{pmatrix}
0 & 1 \\
0 & 0
\end{pmatrix}.\end{equation}
\begin{theorem}{ Stokes’ theorem}
    let $\Omega$ denote a simply-connected region in the complex plane
with counter-clockwise oriented piecewise-smooth boundary$\partial \Omega$. If $f:\Omega \rightarrow \mathbb{C}$ is
differentiable as as function of two variables($u=Re(z)$and $v=Im(z)$) and
and extends continuously to $\partial \Omega$ then 
\begin{equation}
\label{stokes}
    \oint_{\partial\Omega}f(u,v)dz = \iint_{\Omega}2i\overline{\partial}f(u,v)dA(u,v)
\end{equation}
where $dA(u,v)$denotes area measure in the plane.
\end{theorem}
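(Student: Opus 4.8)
The plan is to recognize this identity as the complex-variable repackaging of the classical Green's theorem in the plane. Writing $z = u + iv$ with $u = \real z$ and $v = \imag z$, I would first record the elementary differential identity $dz = du + i\,dv$, so that the contour integral splits into its real and imaginary parts as
\[
\oint_{\partial\Omega} f\,dz = \oint_{\partial\Omega}\left(f\,du + i f\,dv\right).
\]
This reduces the claim to the real two-dimensional form of the fundamental theorem for line integrals.

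Next I would apply Green's theorem to the real one-form $P\,du + Q\,dv$ with the choices $P = f$ and $Q = if$. Since $\partial\Omega$ is oriented counter-clockwise and bounds the simply connected region $\Omega$, this yields
\[
\oint_{\partial\Omega}\left(f\,du + if\,dv\right) = \iint_\Omega \left(\frac{\partial (if)}{\partial u} - \frac{\partial f}{\partial v}\right) dA(u,v) = \iint_\Omega \left(i\frac{\partial f}{\partial u} - \frac{\partial f}{\partial v}\right) dA(u,v).
\]
The final identification is purely algebraic: since $\overline{\partial} = \tfrac{1}{2}(\partial_u + i\,\partial_v)$, one has $i\,\partial_u f - \partial_v f = i(\partial_u f + i\,\partial_v f) = 2i\,\overline{\partial} f$, which is precisely the integrand appearing on the right-hand side of \eqref{stokes}.

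The main obstacle is regularity rather than the algebra. The textbook statement of Green's theorem requires $P, Q \in C^1(\overline{\Omega})$, whereas here $f$ is only assumed differentiable in the interior of $\Omega$ together with a continuous (not necessarily $C^1$) extension to the piecewise-smooth boundary $\partial\Omega$. To bridge this gap I would exhaust $\Omega$ by an increasing family of subdomains $\Omega_n \uparrow \Omega$ with smooth boundaries converging to $\partial\Omega$, apply the $C^1$ version of Green's theorem on each $\Omega_n$ (where $f$ is as smooth as needed in the interior), and then pass to the limit: the boundary integrals converge because $f$ extends continuously to $\partial\Omega$ and the boundary parametrizations converge, while the area integrals converge by dominated convergence once one verifies that $\overline{\partial} f$ is integrable over $\Omega$. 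Treating the finitely many corners of the piecewise-smooth boundary and justifying the interchange of limit and integral under the weak differentiability hypothesis is where the genuine care is required; by contrast, the orientation convention and the conversion of $dz$ into $2i\,\overline{\partial} f\,dA$ are immediate.
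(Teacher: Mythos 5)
Your derivation is correct, and the algebra checks out: with $P=f$, $Q=if$ Green's theorem gives $\iint_\Omega(i\partial_u f-\partial_v f)\,dA=\iint_\Omega 2i\,\overline{\partial}f\,dA$, which is exactly the stated identity. Note, however, that the paper offers no proof of this statement at all — it is quoted as a classical fact (essentially from the reference of Dieng--McLaughlin--Miller) and immediately applied to the functions $\pm E(u,v)e^{-2i\theta}$ on $\Omega_\pm$ — so there is no in-paper argument to compare yours against; your reduction to Green's theorem is the standard route and is more than the paper supplies. One small caveat on the regularity discussion: on the exhausting subdomains $\Omega_n$ you say $f$ is ``as smooth as needed in the interior,'' but the hypothesis grants only differentiability, not $C^1$; to be fully rigorous you either need a Green's theorem valid for merely differentiable integrands with integrable $\overline{\partial}f$ (a Goursat-type refinement), or you should observe that in every application in the paper the functions involved are in fact piecewise $C^1$, so the classical version suffices. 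Your flagging of the integrability of $\overline{\partial}f$ and of the corners of the piecewise-smooth boundary as the genuine points of care is exactly right.
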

We define a function $E(u,v)$ on $\Omega_{+}\cup \Omega{-}$ as follows:
\begin{align}
E(u,v)=\cos{\left(2\arg(u+iv-z_0)\right)}\widehat{q}_{0}(u)
+(1-\cos{\left(2\arg(u+iv-z_0)\right)}\widehat{q}_{0}(z_0)\\
u+iv \in \Omega_{+}\cup \Omega{-}. \nonumber
\end{align}
It follows:\\
$E(u,0)=\widehat{q}_{0}(u)$ on the boundary $v=0$ since $\cos{\left(2\arg(u+iv-z_0)\right)}\equiv 1$,\\
$E(u,v-z_0)=\widehat{q}_{0}(z_0)$ on the boundary $v=z_0-u$ since $\cos{(2\arg(u+iv-z_0))}\equiv 0$.
\begin{theorem}
    If $\widehat{q}_{0}(z) \in H^{2m,2m}(\mathbb{R})$,for given $E(u,v)$ in 1.13,we have:
    \begin{equation}
    \label{linearcase}
        q(x,t) = \frac{1}{\pi}\int_{z_0+\infty e^{3\pi i/4}}^{z_0+\infty e^{-\pi i/4}}\widehat{q}_{0}(z_0)e^{-2i\theta(z;z_{0})}dz\\
        + \frac{1}{\pi}\iint_{\Omega_{+}- \Omega{-}}2i\overline{\partial}(E(u,v)e^{-2i\theta(z;z_{0})})dA(u,v).
    \end{equation}
\end{theorem}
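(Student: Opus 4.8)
The plan is to read the stated Stokes' theorem \eqref{stokes} as a Cauchy--Green (Pompeiu) formula and to apply it to the exact Fourier representation $q(x,t)=\frac{1}{\pi}\int_{\mathbb{R}}\widehat{q}_0(z)e^{-2i\theta(z;z_0)}\,dz$. The first observation is that, since $\cos(2\arg(u+iv-z_0))\equiv 1$ on the real axis, one has $E(u,0)=\widehat{q}_0(u)$, so that the solution formula is already the boundary integral $q(x,t)=\frac{1}{\pi}\int_{\mathbb{R}}f(u,0)\,du$ of the non-analytic extension $f(u,v):=E(u,v)e^{-2i\theta(z;z_0)}$. The second observation is that the contour from $z_0+\infty e^{3\pi i/4}$ to $z_0+\infty e^{-\pi i/4}$ is exactly the slope $-1$ line $u+v=z_0$ through $z_0$, and that on it $\cos(2\arg(u+iv-z_0))\equiv 0$, so there $f$ reduces to the constant multiple $\widehat{q}_0(z_0)e^{-2i\theta}$. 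The whole proof is thus the bookkeeping of deforming $\mathbb{R}$ onto this steepest-descent line while collecting the $\overline{\partial}$-defect.

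Concretely, I would let $\Omega_+$ be the region in the upper half-plane between $\mathbb{R}$ and the ray $\arg(z-z_0)=3\pi/4$, and $\Omega_-$ the region in the lower half-plane between $\mathbb{R}$ and the ray $\arg(z-z_0)=-\pi/4$; these are precisely the sectors swept when the two half-lines $(-\infty,z_0)$ and $(z_0,+\infty)$ are pushed onto the rays at angles $3\pi/4$ and $-\pi/4$. Orienting the steepest-descent line from upper-left to lower-right makes its endpoints match $-\infty$ and $+\infty$ on $\mathbb{R}$, which is exactly the orientation recorded in the limits of \eqref{linearcase}. On each of $\Omega_+$ and $\Omega_-$ the function $f$ is continuous up to the boundary and differentiable in the interior, so \eqref{stokes} applies (after the truncation described below) and gives, on each wedge, the identity $\oint_{\partial\Omega_\pm}f\,dz=\iint_{\Omega_\pm}2i\,\overline{\partial}f\,dA$, whose oriented boundary consists of one piece of $\mathbb{R}$ and one steepest-descent ray.

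Assembling the two identities amounts to a contour deformation with an area correction: the real-axis pieces recombine into $\frac{1}{\pi}\int_{\mathbb{R}}\widehat{q}_0(u)e^{-2i\theta}\,du=q(x,t)$; the two ray pieces combine, using $f=\widehat{q}_0(z_0)e^{-2i\theta}$ there, into $\frac{1}{\pi}\int_{z_0+\infty e^{3\pi i/4}}^{z_0+\infty e^{-\pi i/4}}\widehat{q}_0(z_0)e^{-2i\theta}\,dz$, which is the first term of \eqref{linearcase}; and the area integrals combine into $\frac{1}{\pi}\iint_{\Omega_+-\Omega_-}2i\,\overline{\partial}(Ee^{-2i\theta})\,dA$, the minus sign being precisely the relative orientation of the lower wedge against the upper one. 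Solving the resulting identity for $q(x,t)$ yields \eqref{linearcase}.

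The main obstacle is not the algebra but the justification of \eqref{stokes} on the unbounded wedges $\Omega_\pm$ together with the vanishing of the arcs at infinity. Here I would use that in the interior of each wedge the phase obeys $\real(-2i\theta)\le -c\,|z-z_0|^{2}$ for some $c>0$, since $-2i\theta$ has a nondegenerate quadratic minimum at the stationary point $z_0$ and the rays at $3\pi/4$ and $-\pi/4$ are the steepest-descent directions; writing $z-z_0=\rho e^{i\phi}$ this is $\real(-2i\theta)\sim -c\,\rho^{2}|\sin 2\phi|<0$. Combined with the decay of $\widehat{q}_0$ furnished by $\widehat{q}_0\in H^{2m,2m}(\mathbb{R})$, this exponential factor makes $f$ integrable on $\partial\Omega_\pm$ and $\overline{\partial}f$ integrable on $\Omega_\pm$. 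The clean route is to first apply \eqref{stokes} on the truncated wedges $\Omega_\pm\cap\{|z-z_0|\le R\}$, show that the circular arc $|z-z_0|=R$ contributes $o(1)$ as $R\to\infty$ by this Gaussian decay, and then pass to the limit. Finally one checks that the corner $z=z_0$ is harmless: although $\arg(z-z_0)$ is undefined there, $E$ extends continuously to $z_0$ with value $\widehat{q}_0(z_0)$, and $\overline{\partial}E=O(|z-z_0|^{-1})$ is integrable against the area element $\rho\,d\rho\,d\phi$.
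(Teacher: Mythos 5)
Your argument is essentially the paper's own proof: apply the stated Stokes/Cauchy--Green identity to $\pm E(u,v)e^{-2i\theta}$ on the two wedges $\Omega_\pm$ between $\mathbb{R}$ and the steepest-descent rays at angles $3\pi/4$ and $-\pi/4$, recombine the real-axis boundary pieces into $q(x,t)$, use $E\equiv\widehat{q}_0(z_0)$ on the rays for the first term, and collect the area integrals as the $\overline{\partial}$-correction. Your additional care about truncating the unbounded wedges, the Gaussian decay of the phase, and the integrable singularity of $\overline{\partial}E$ at the corner $z_0$ is a welcome tightening of details the paper leaves implicit, but the route is the same.
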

\begin{proof}
    apply Stokes’ theorem to the functions $\pm E(u,v)e^{-2i\theta(z;z_{0})}$ on domains $\Omega_{\pm}$\\
    \begin{align*}
    \iint_{\Omega_{+}}2i\overline{\partial}(E(u,v)e^{-2i\theta(z;z_{0})})dA(u,v)&=\oint_{\partial{\Omega_{+}}}E(u,v)e^{-2i\theta(z;z_{0})}dz\\
    &=\left(\int_{-\infty}^{0}+\int_{0}^{+\infty e^{3\pi i/4}}\right)E(u,v)e^{-2i\theta(z;z_{0})}dz
    \end{align*}
    and similarly
    \begin{align*}
    \iint_{\Omega_{-}}2i\overline{\partial}(-E(u,v)e^{-2i\theta(z;z_{0})})dA(u,v)&=\left(\int_{0}^{+\infty}-\int_{0}^{+\infty e^{-\pi i/4}}\right)E(u,v)e^{-2i\theta(z;z_{0})}dz.
    \end{align*}
    add up the results and apply the boundary value of $E(u,v)$ and \eqref{stokes}, we obtain \eqref{linearcase}.
\end{proof}
It is easy to check that the first term can be explicitly calculated,that is 
\begin{equation}
    \frac{1}{\pi}\int_{z_0+\infty e^{3\pi i/4}}^{z_0+\infty e^{-\pi i/4}}\widehat{q}_{0}(z_0)e^{-2i\theta(z;z_{0})}dz = t^{-1/2}\frac{\widehat{q}_{0}e^{-\pi /4}}{2\sqrt{\pi}}e^{ix^{2}/(4t)}
\end{equation}
Next, we continue to calculate the second term of $q(x,t)$ through the method of stationary phase  to deduce an asymptotic expansion of $q(x,t)$
 First note that $\overline{\partial}e^{-2i\theta(z;z_{0})}\equiv 0 $ since $e^{-2i\theta(z;z_{0})}$ is an entire function. A direct computation gives
 \begin{align}
     \overline{\partial}(E(u,v)e^{-2i\theta(z;z_{0})})& =  \overline{\partial}E(u,v)e^{-2i\theta(z;z_{0})}\\ \nonumber&= \frac{1}{2}\cos{\left(2\arg(u+iv-z_0)\right)}\widehat{q}_{0}'(u)e^{-2i\theta(z;z_{0})}\\ \nonumber
     &+({q}_{0}(u)-{q}_{0}(z_0))\overline{\partial}\cos{\left(2\arg(u+iv-z_0)\right)}.
 \end{align}
 In polar coordinates $(\rho,\phi)$ centered at the point $z_0 \in \mathbb{R}$ and defined by 
 $$u=z_0+\rho \cos{\phi}, \quad v=\rho \sin{\phi},$$ 
 the \textit{Cauchy-Riemann} operator $\overline{\partial}$ takes the equivalent form
\begin{align}
\overline{\partial} :=\frac{1}{2}(\frac{\partial}{\partial u}+i\frac{\partial}{\partial v}) = \frac{e^{i\phi}}{2}(\frac{\partial}{\partial \rho}+\frac{i}{\rho}\frac{\partial}{\partial \phi}).
\end{align}
so as $\arg(u+iv-z_0)=\phi$,we have
\begin{equation}
    \overline{\partial}\cos{\left(2\arg(u+iv-z_0)\right)}=\frac{ie^{i\phi}}{2\rho}\frac{d}{d\phi}\cos{2\phi}=-\frac{ie^{i\phi}}{\rho}\sin{2\phi}.
\end{equation}
Combine formula 1.16 and 1.17,we obtain
\begin{equation}
    \overline{\partial}E(u,v) = \frac{1}{2}\cos{2\phi}\widehat{q}'_0(z_0+\rho \cos{\phi})-\frac{ie^{i\phi}}{\rho}\sin{2\phi}({q}_{0}(z_0+\rho \cos{\phi})-{q}_{0}(z_0)).
\end{equation}
Then
\begin{align}
    \int_{\Omega_{+}} \overline{\partial} E(u, v) e^{-2 i t \theta\left(z, z_{0}\right)} dA(u,v) = \int_{-\frac{\pi}{4}}^{0} d \phi \int_{0}^{+\infty} \frac{1}{2} \cos{2\phi} \widehat{q}^{\prime}\left(z_{0}+\rho \cos{\phi}\right) \rho e^{-2 i t\left[2 \rho^{2} e^{2 i \phi}-2 z_{0}^{2}\right]} d\rho
		\\-i\int_{-\frac{\pi}{4}}^{0} d \phi \int_{0}^{+\infty} e^{i \phi} \sin{2\phi}\left[\widehat{q}_{0}\left(z_{0}+\rho \cos{\phi}\right)-\widehat{q}_{0}\left(z_{0}\right)\right] e^{-2 i t\left[2 \rho^{2} e^{2 i \phi}-2 z_{0}^{2}\right]} d \rho \triangleq \mathbb{I}_1-i\mathbb{I}_2
\end{align}
We first deal with $\mathbb{I}_1$ ,donate $t = t(\rho,\phi) =z_{0}+\rho\cos{\phi}$
\begin{flalign}
    &\ \mathbb{I}_1=\int_{-\frac{\pi }{4} }^{0}\frac{1}{2}\cos{2\phi}e^{4itz_{0}^{2}}d\phi\int_{0}^{+\infty}\widehat{q}'_{0}\left ( t \right ) \rho e^{-4it\rho^{2}e^{2i\phi}}d\rho \nonumber\\
    &\ = \int_{-\frac{\pi}{4}}^{0} \frac{\frac{1}{2} \cos{2\phi} e^{4 i t z_{0}^{2}}}{\left(-8i t e^{2 i \phi}\right)} d\phi \int_{0}^{+\infty} \widehat{q}_{0}^{\prime}\left(t\right) d e^{-4 i t\rho^{2}e^{2i\phi}} \nonumber\\
    &\ = -\int_{-\frac{\pi}{4}}^{0} \frac{\frac{1}{2}\cos{2\phi}e^{4 i t z_{0}^{2}}}{\left(-8i t e^{2 i \phi}\right)} \widehat{q}_{0}^{\prime}\left(z_{0}\right) d\phi-\int_{-\frac{\pi}{4}}^{0} \frac{\frac{1}{2} \cos{2\phi} e^{4 i t z_{0}^{2}}}{\left(-8 i t e^{2 i \phi}\right)} d \phi
		\int_{0}^{+\infty} {\widehat{q} }_{0}'' \left ( t \right ) \cos{\phi}e^{-4ite^{2i\phi}\rho^{2}}d\rho \nonumber\\
    &= -\int_{-\frac{\pi}{4}}^{0} \frac{\frac{1}{2} \cos{2\phi}e^{4 i t z_{0}^{2}}}{\left(-8i t e^{2 i \phi}\right)} \widehat{q}_{0}^{\prime}\left(z_{0}\right) d\phi-\int_{-\frac{\pi}{4}}^{0} \frac{\frac{1}{2} \cos{2\phi}e^{4 i t z_{0}^{2}}\cos{\phi}}{\left(-8i t e^{2 i \phi}\right)}d\phi\int_{0}^{+\infty} {\widehat{q}_{0} }''\left ( z_{0} \right ) e^{-4ite^{2i\phi}\rho^{2}}d\rho \nonumber\\
	& -\int_{-\frac{\pi}{4}}^{0} \frac{\frac{1}{2} \cos{2\phi}e^{4 i t z_{0}^{2}}\cos{\phi}}{\left(-8i t e^{2 i \phi}\right)^2} d\phi\int_{0}^{+\infty} \frac{{\widehat{q}_{0}}'' \left ( t \right ) -{\widehat{q}_{0} }'' \left ( z_{0} \right ) }{\rho}de^{-4ite^{2i\phi}\rho^{2}} \nonumber\\
    &=\int_{-\frac{\pi}{4}}^{0} \frac{\frac{1}{2} \cos{2\phi} e^{4 i t z_{0}^{2}}}{\left(8i t e^{2 i \phi}\right)}  \widehat{q}_{0}^{\prime}\left(z_{0}\right) \mathrm{d}\phi
	+\int_{-\frac{\pi}{4}}^{0} \frac{\frac{1}{2} \cos{2\phi} e^{4 i t z_{0}^{2}}}{\left(8 i t e^{2 i \phi}\right)}d\phi\int_{0}^{+\infty} {\widehat{q} }''\left ( z_{0} \right ) \cos{\phi} e^{-4ite^{2i\phi}\rho^{2}}\mathrm{d}\rho\\
	&+\int_{-\frac{\pi}{4}}^{0} \frac{\frac{1}{2} \cos{2\phi} e^{4 i t z_{0}^{2}}\cos^{2}(\phi)}{\left(8 i t e^{2 i \phi}\right)^{2}}\widehat{q}_{0}^{(3)}(z_0)d\phi\\
	&+\int_{-\frac{\pi}{4}}^{0} \frac{\frac{1}{2} \cos{2\phi} e^{4 i t z_{0}^{2}}\cos{\phi}}{\left(8 i t e^{2 i \phi}\right)^{2}}d\phi
	\int_{0}^{+\infty} e^{-4ite^{2i\phi}\rho^{2}}\left [ \frac{\widehat{q}_{0}^{(3)}\left ( t \right )\rho\cos{\phi}  - \widehat{q}_{0}'' \left ( t \right ) -{\widehat{q}_{0}}'' \left ( z_{0} \right ) }{\rho^2}\right ] \mathrm{d}\rho \triangleq \mathbb{I}_{3}.
\end{flalign}
\begin{flalign}
    &\mathbb{I}_{3} = \frac{1}{2}\int_{-\frac{\pi}{4}}^{0} \frac{\frac{1}{2} \cos(2\phi)) e^{4 i t z_{0}^{2}}\cos^{3}(\phi)}{\left(8 i t e^{2 i \phi}\right)^{2}}\widehat{q}_{0}^{(4)}(z_0)d\phi\int_{0}^{+\infty} e^{-4ite^{2i\phi}\rho^{2}}d\rho\\
    &+\int_{-\frac{\pi}{4}}^{0} \frac{\frac{1}{2} \cos{2\phi} e^{4 i t z_{0}^{2}}\cos{\phi}}{\left(-8 i t e^{2 i \phi}\right)^{3}}d\phi\int_{0}^{+\infty} \frac{\widehat{q}_{0}^{(3)}(t)\rho\cos{\phi}-\widehat{q}_{0}^{\prime \prime}(t)+\widehat{q}_{0}^{\prime \prime}\left(z_{0}\right) -\frac{1}{2}\widehat{q}_{0}^{(4)}(z_0) \rho^{2} \cos ^{2}(\phi)}{\rho^{3}} d e^{-4 i t e^{2 i \phi} \rho^{2}}\\
    &= \frac{1}{2}\int_{-\frac{\pi}{4}}^{0} \frac{\frac{1}{2} \cos{2\phi} e^{4 i t z_{0}^{2}}\cos^{3}(\phi)}{\left(8 i t e^{2 i \phi}\right)^{2}}\widehat{q}_{0}^{(4)}(z_0)d\phi\int_{0}^{+\infty} e^{-4ite^{2i\phi}\rho^{2}}d\rho+\frac{1}{3} \int_{-\frac{\pi}{4}}^{0} \frac{\frac{1}{2} \cos{2\phi} e^{4 i t z_{0}^{2}}\cos^{4}(\phi)}{\left(8 i t e^{2 i \phi}\right)^{3}} \widehat{q}_{0}^{(5)}(z_0) \mathrm{d}\phi\\
    &-\int_{-\frac{\pi}{4}}^{0} \frac{\frac{1}{2}\cos{2\phi} e^{4 i t z_{0}^{2}}\cos{\phi}}{\left(-8 i t e^{2 i \phi}\right)^{3}}d\phi\int_{0}^{+\infty} e^{-4ite^{2i\phi}\rho^{2}}\frac{Q(z_0;\rho,\phi)}{\rho^{4}} d\rho \triangleq = \mathbb{I}_4.
\end{flalign}
Where $Q(z_0;\rho,\phi) = \widehat{q}_{0}^{(4)}(t)\rho^{2}\cos^{2}(\phi)-\widehat{q}_{0}^{(4)}(z_0)\rho^{2}\cos^{2}(\phi)-3\widehat{q}_{0}^{(3)}(t)\rho\phi+3\widehat{q}_{0}^{(2)}(t)-3\widehat{q}_{0}^{\prime \prime}(z_0)+\frac{1}{2}\widehat{q}_{0}^{(4)}(z_0)$\\
We compute $\mathbb{I}_4$ as we deal with $\mathbb{I}_3$ and repeat this process,we obtain
\begin{flalign}
    &I_{1}=\int_{-\frac{x}{4}}^{0} \frac{\frac{1}{2}\cos{2\phi} e^{4 i tz_{0}^{2}} }{\left(8 i t e^{2 i \phi}\right)} \widehat{q}_{0}^{\prime}\left(z_{0}\right) d\phi+\sum_{k=2} \frac{1}{(2 k-3) ! !} \int_{-\frac{\pi}{4}}^{0} \frac{\frac{1}{2} \cos{2\phi} e^{4 i t z_{0}^{2}}}{\left(8 i t e^{2 i \phi}\right)^{k}} \widehat{q}^{(2 k-1)}_{0}\left(z_{0}\right) \cos ^{(2k-2)} (\phi)\mathrm{d}\phi \nonumber\\
    &\quad +\sum_{k=1} \int_{-\frac{\pi}{4}}^{0} \frac{\frac{1}{2} \cos{2\phi} e^{4 i t z_{0}^{2}}}{\left(8 i t e^{2 i \phi}\right)^{k}}  \widehat{q}_{0}^{(2 k)}\left(z_{0}\right) \cos ^{(2 k-1)} \phi d\phi \int_{0}^{+\infty} e^{-4 i t \rho^2e^{2 i \phi}} d \rho. \nonumber\\
\end{flalign}
Similarly,we have
\begin{flalign}
    &\mathbb{I}_2 = \sum_{k=1}\frac{1}{(2k)!!}\int_{-\frac{\pi}{4}}^{0}\frac{e^{i\phi} \sin{2\phi} e^{4itz_{0}^{2}}\widehat{q}^{(2k)}_{0}(z_0)\cos^{2k}\phi}{(8ite^{2i\phi})^k}d\phi \int_{0}^{+\infty} e^{-4 i t \rho^2e^{2 i \phi}} d \rho \nonumber\\
    &+\sum_{k=1}\frac{1}{(2k-1)!!}\int_{-\frac{\pi}{4}}^{0}\frac{e^{i\phi} \sin{2\phi} e^{4itz_{0}^{2}}\widehat{q}^{(2k-1)}_{0}(z_0)\cos^{2k-1}\phi}{(8ite^{2i\phi})^k}d\phi.
\end{flalign}
Then $\mathbb{I}_1-i\mathbb{I}_2$ has the following form
\begin{flalign}
    \mathbb{I}_1-i\mathbb{I}_2 = \sum_{k=1}^{n}t^{-k}\widehat{q}^{(2k-1)}_{0}(z_0)e^{4itz_{0}^2}\beta_{k}+\sum_{k=1}^{n}t^{-(2k+1)/2}\widehat{q}^{(2k)}_{0}(z_0)e^{4itz_{0}^2}\gamma_{k}
\end{flalign}
where
\begin{flalign}
    &\beta_{k} = \int_{-\frac{\pi}{4}}^{0}\frac{\frac{1}{2}\cos{2\phi}cos^{2k-2}\phi}{(2k-3)!!(8ie^{2i\phi})^k}-\frac{ie^{i\phi}\sin{2\phi} \cos^{2k}\phi}{(2k)!!(8ie^{2i\phi})^k}d\phi,\\
    &\gamma_{k} = \int_{-\frac{\pi}{4}}^{0}\frac{\frac{1}{2}(\frac{\pi}{i})^{1/2}\cos{2\phi}\cos^{2k-2}(\phi)}{2{\sqrt{\cos{2\phi}}(8ie^{2i\phi})^k}}-\frac{ie^{i\phi}\sin{2\phi}\cos^{2k-1}(\phi)(\frac{\pi}{i})^{1/2}}{2(2k-1)!!\sqrt{\cos{2\phi}}(8ie^{2i\phi})^k}d\phi.
\end{flalign}
Where we define $(2k-3)!! = 1 $ when $k = 1$
It's easy to check the following equation
\begin{equation}
    \int_{\Omega_{+}} \overline{\partial} E(u, v) e^{-2 i t \theta\left(z, z_{0}\right)} dA(u,v) = -\int_{\Omega_{-}} \overline{\partial} E(u, v) e^{-2 i t \theta\left(z, z_{0}\right)} dA(u,v).
\end{equation}
So,the asymptotic expansion of $q(x,t)$ takes the following form
\begin{align}
    q(x,t) &= t^{-1/2}\frac{\widehat{q}_{0}(z_0)e^{-\pi /4}}{2\sqrt{\pi}}e^{ix^{2}/(4t)}+\sum_{k=1}^{n}t^{-k}\widehat{q}^{(2k-1)}_{0}(z_0)e^{4itz_{0}^2}2i\beta_{k}\\
    &+\sum_{k=1}^{n}t^{-(2k+1)/2}\widehat{q}^{(2k)}_{0}(z_0)e^{4itz_{0}^2}2i\gamma_{k}.
\end{align}
\section*{Acknowledgement}
The authors want to thank Prof. Peter Miller from the University of Michigan for helpful comments.
\bibliographystyle{amsplain}

\begin{thebibliography}{10}
\bibitem{DZ93}
Deift P.A., Zhou X.: A Steepest Descent Method for Oscillatory Riemann-Hilbert
Problems. Asymptotics for the MKdV equation. \textit{Ann. of Math.} 137, 295-368 (1993)
\bibitem{DIZ}
Deift P.A., Its A.R., Zhou X.: Long-time Asymptotics for Integrable Nonlinear Wave
Equations. Important developments in Soliton theory. Fokas, A.S., Zakharov, V.E.
(eds.) Berlin, Heidelberg, New York: Springer, 1993
\bibitem{DZ94}
Deift, P. A.; Zhou, X.
Long-time asymptotics for integrable systems. Higher order theory. \textit{Comm. Math. Phys}.165(1994), no.1, 175–191.
 \bibitem{DM}
Dieng, M., McLaughlin, K D.-T.: Long-time Asymptotics for the NLS equation via dbar methods. Preprint, arXiv:0805.2807, 2008.
\bibitem{DMM}
Dieng, Momar; McLaughlin, Kenneth D. T.-R.; Miller, Peter D.
Dispersive asymptotics for linear and integrable equations by the $\overline{\partial}$-steepest descent method.Nonlinear dispersive partial differential equations and inverse scattering, 253–291.
\textit{Fields Inst. Commun.}, 83
Springer, New York, [2019], 2019
\bibitem{nist}
N.I.S.T. Digital, Library of mathematical functions, release 1.0.11 of 2016-06-08, online companion to [24], http://dlmf.nist.gov/.
\bibitem{SA}
 Segur, H., Ablowitz, M.J.: Asymptotics Solutions and Conservation Laws for the
 Non-linear Schrodinger equation. Part I, J. Math. Phys. 17, 710-713 (1976)
 \bibitem{Var}
 A.~H. Vartanian, Higher order asymptotics of the modified non-linear Schr\"odinger equation, Comm. Partial Differential Equations {\bf 25} (2000), no.~5-6, 1043--1098; MR1759802
\bibitem{zhou}
Zhou, Xin
$L^2$-Sobolev space bijectivity of the scattering and inverse scattering transforms.Comm. Pure Appl. Math.51(1998), no.7, 697–731.
\end{thebibliography}

\end{document}